\theoremstyle{definition}
\newtheorem{theorem}{Theorem}[section]
\newtheorem{proposition}[theorem]{Proposition}
\newtheorem{lemma}[theorem]{Lemma}
\newtheorem{corollary}[theorem]{Corollary}
\theoremstyle{definition}
\newtheorem{definition}[theorem]{Definition}
\newtheorem{example}[theorem]{Example}
\theoremstyle{remark}
\newtheorem{remark}[theorem]{Remark}
\title[$q$-Berezin Range of Operators in Hardy Space]{$q$-Berezin Range of Operators in Hardy Space}
\author{Debarati Bhattacharya} 
\address{Department of Mathematics, Indian Institute of Technology Bhilai, Durg, 491002, Chhattisgarh, India}
\email{debaratib@iitbhilai.ac.in}
\author{Arnab Patra}
\address{Department of Mathematics, Indian Institute of Technology Bhilai, Durg, 491002, Chhattisgarh, India}
\email{arnabp@iitbhilai.ac.in}
\subjclass[2020]{Primary 47B32, 47B38; Secondary 47A05, 30H10}
\keywords{$q$-Berezin range, Convexity, Reproducing kernel Hilbert space, Toeplitz operator, Composition operator.}
\begin{document}

\begin{abstract}
This article investigates the geometric structure of the q-Berezin range of operators on the Hardy space, with a particular emphasis on convexity. Convexity results are established for several operator classes, including Toeplitz, weighted shift, and certain composition operators. The study highlights the relationship between the $q$-Berezin range and the classical Berezin range.
\end{abstract}

\maketitle


\section{Introduction}\label{sec1}

Let $\mathcal{H}$ be a complex Hilbert space equipped with inner product $\langle \cdot,\cdot\rangle_{\mathcal{H}}$ and the induced norm $\|\cdot\|_{\mathcal{H}}$. We denote by $\mathscr{B}(\mathcal{H})$ the $C^*$-algebra of all bounded linear operators on $\mathcal{H}$. The open unit disc and the circle of radius $r$ centered at the origin in the complex plane are denoted by $\mathbb{D}$ and $\mathbb{T}_r$, respectively, with $\mathbb{T}_1=\mathbb{T}$.

A \emph{reproducing kernel Hilbert space} (RKHS) on a set $\Omega$ is a Hilbert space $\mathscr{H}$ of complex-valued functions such that, for each $x\in\Omega$, the evaluation functional $E_x:\mathscr{H}\to\mathbb{C}$ defined by $E_x(f)=f(x)$ is bounded. By the Riesz representation theorem, for every $x\in\Omega$ there exists a unique element $k_x\in\mathscr{H}$ satisfying
\[
f(x)=\langle f,k_x\rangle_{\mathscr{H}}, \qquad f\in\mathscr{H}.
\]
The function $k_x$ is called the reproducing kernel at $x$, and its normalized version is denoted by $\hat{k}_x = k_x/\|k_x\|_{\mathscr{H}}$. It follows that $k_x(y)=\langle k_x,k_y\rangle_{\mathscr{H}}$ and $\|E_x\|_{\mathrm{op}}^2=\|k_x\|_{\mathscr{H}}^2$. Classical examples of RKHS include the Hardy, Bergman, Dirichlet, and Fock spaces (see \cite{paulsen2016introduction}).

For $T\in\mathscr{B}(\mathscr{H})$, the \emph{Berezin transform} of $T$ is the function $\tilde{T}:\Omega\to\mathbb{C}$ defined by
\[
\tilde{T}(w)=\langle T\hat{k}_w,\hat{k}_w\rangle_{\mathscr{H}}, \qquad w\in\Omega,
\]
introduced by Berezin \cite{berezin1972covariant}. The associated \emph{Berezin range} and \emph{Berezin number}, introduced by Karaev \cite{karaev2006berezin}, are given respectively by
\[
\mathrm{Ber}(T)=\{\langle T\hat{k}_w,\hat{k}_w\rangle_{\mathscr{H}}: w\in\Omega\}, 
\qquad
\mathrm{ber}(T)=\sup_{w\in\Omega}|\tilde{T}(w)|.
\]
These quantities satisfy $\mathrm{Ber}(T)\subseteq W(T)$ and $\mathrm{ber}(T)\leq w(T)\leq \|T\|_{\mathrm{op}}$, where
\[
W(T)=\{\langle Tx,x\rangle_{\mathcal{H}}:\|x\|_{\mathcal{H}}=1\}, \qquad
w(T)=\sup\{|\langle Tx,x\rangle_{\mathcal{H}}|:\|x\|_{\mathcal{H}}=1\}
\]
denote the numerical range and numerical radius of $T$, respectively. While the numerical range is always convex by the Toeplitz--Hausdorff theorem \cite{gau2021numerical}, the Berezin range need not be convex. The geometric and convexity properties of $\mathrm{Ber}(T)$ have attracted considerable attention in recent years; see, for instance, \cite{augustine2023composition, augustine2024berezin, cowen2022convexity, karaev2013reproducing, maity2025convexity}. Convexity questions for Berezin ranges of Toeplitz operators with harmonic symbols have been studied in \cite{sen2025berezin}, and various estimates for Berezin numbers have been obtained in \cite{bhunia2023new, garayev2023weighted, tapdigoglu2024some, zamani2024berezin}.

The \emph{Hardy space} $H^2(\mathbb{D})$ consists of all analytic functions
\[
H^2(\mathbb{D})=\left\{ f(z)=\sum_{n=0}^{\infty} a_n z^n \in \mathrm{Hol}(\mathbb{D}) : \sum_{n=0}^{\infty} |a_n|^2 < \infty \right\}.
\]
It is a RKHS with reproducing kernel
\[
k_w(z)=\frac{1}{1-\overline{w}z}=\sum_{n=0}^{\infty}\overline{w}^n z^n, \qquad z\in\mathbb{D},
\]
and $\|k_w\|^2 = (1-|w|^2)^{-1}$. The inner product and norm on $H^2(\mathbb{D})$ are given by
\[
\left\langle \sum_{n=0}^\infty a_n z^n, \sum_{n=0}^\infty b_n z^n \right\rangle = \sum_{n=0}^{\infty} a_n \overline{b_n}, 
\qquad
\|f\|^2 = \sum_{n=0}^{\infty} |a_n|^2,
\]
making $H^2(\mathbb{D})$ isometrically isomorphic to $\ell^2$.

Recently, Stojiljkovi\'{c} et al. \cite{stojiljkovic25} introduced the notion of the \emph{$q$-Berezin range} and \emph{$q$-Berezin number} for $q\in(0,1]$. For $T\in\mathscr{B}(\mathscr{H})$, these are defined by
\[
\mathrm{Ber}_q(T)=\{\langle T\hat{k}_{w_1},\hat{k}_{w_2}\rangle_{\mathscr{H}} : w_1,w_2\in\Omega,\ \langle \hat{k}_{w_1},\hat{k}_{w_2}\rangle_{\mathscr{H}}=q\},
\]
\[
\mathrm{ber}_q(T)=\sup\{ |z| : z\in \mathrm{Ber}_q(T)\}.
\]
It follows directly from the definition that $\mathrm{Ber}_q(T)$ is a nonempty bounded subset of $\mathbb{C}$, satisfying
\[
\mathrm{Ber}_q(aT+bI)=a\,\mathrm{Ber}_q(T)+bq, \qquad
\mathrm{Ber}_q(T^*)=\mathrm{Ber}_q(T)^*,
\]
for all complex scalars $a,b$. Moreover, $\mathrm{Ber}_1(T)=\mathrm{Ber}(T)$ and $\mathrm{ber}_q(T)\leq \|T\|_{\mathrm{op}}$.

It is straightforward to verify that $\mathrm{Ber}_q(T) \subseteq W_q(T)$, where $W_q(T)$ denotes the $q$-numerical range of $T$, introduced by Marcus and Andresen \cite{marcus1977constrained}. For $|q|\leq 1$, the \emph{$q$-numerical range} is defined by
\[
W_q(T)=\{\langle Tx,y\rangle_{\mathcal{H}} : \|x\|_{\mathcal{H}}=\|y\|_{\mathcal{H}}=1,\ \langle x,y\rangle_{\mathcal{H}}=q\},
\]
and the associated \emph{$q$-numerical radius} is given by
\[
w_q(T)=\sup\{|\langle Tx,y\rangle_{\mathcal{H}}| : \|x\|_{\mathcal{H}}=\|y\|_{\mathcal{H}}=1,\ \langle x,y\rangle_{\mathcal{H}}=q\}.
\]
It is known that $W_q(T)$ is convex (Tsing \cite{tsing1984constrained}), bounded, nonempty, and unitarily invariant. 

\medskip

Despite the recent introduction of the $q$-Berezin range, its geometric structure and convexity properties have not yet been systematically studied. The primary objective of this paper is to initiate such an investigation for bounded linear operators on the Hardy space $H^2(\mathbb{D})$.

The paper is organized as follows. In Section~\ref{sec2}, we present a complete characterization of normalized reproducing kernels satisfying $\langle \hat{k}_{w_1}, \hat{k}_{w_2} \rangle = q$ for $0<q\le 1$. This characterization is important to study the $q$-Berezin range in some cases. In Section~\ref{sec3}, we establish the convexity of the $q$-Berezin range for several classes of operators. In particular, we analyze Toeplitz operators using a $q$-Poisson kernel approach and describe the structure of their $q$-Berezin ranges for both harmonic and analytic symbols. We also examine the $q$-Berezin range of weighted shift operators and investigate the convexity of the $q$-Berezin range for certain classes of composition operators. Finally, Section~\ref{sec4} contains some concluding remarks.

\section{Foundation and geometric structures of $q$-Berezin range}\label{sec2}

We first analyze the geometric structure of pairs of normalized reproducing kernels satisfying the constraint $\langle\hat{k}_{w_1},\hat{k}_{w_2}\rangle=q,$ which plays a central role in the definition. Using this characterization, we develop structural properties of the $q$-Berezin range.
	\begin{lemma}
		For any $w_1,w_2\in\mathbb{D}$, $\langle\hat{k}_{w_1},\hat{k}_{w_2}\rangle\neq0$.
	\end{lemma}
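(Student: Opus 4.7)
The plan is to compute $\langle \hat{k}_{w_1}, \hat{k}_{w_2}\rangle$ directly using the explicit reproducing kernel formula already stated in Section~1. First I would use the reproducing property (or equivalently, the closed form $k_w(z) = 1/(1-\overline{w}z)$) to evaluate the unnormalized inner product:
\[
\langle k_{w_1}, k_{w_2}\rangle = k_{w_1}(w_2) = \frac{1}{1-\overline{w_2}w_1}.
\]
Since $w_1, w_2 \in \mathbb{D}$ gives $|\overline{w_2}w_1| = |w_1||w_2| < 1$, the denominator is well-defined and nonzero, hence $\langle k_{w_1}, k_{w_2}\rangle \neq 0$.

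Next, I would pass to the normalized kernels by dividing through by the (strictly positive) norms $\|k_{w_1}\| = 1/\sqrt{1-|w_1|^2}$ and $\|k_{w_2}\| = 1/\sqrt{1-|w_2|^2}$, giving the explicit formula
\[
\langle \hat{k}_{w_1}, \hat{k}_{w_2}\rangle = \frac{\sqrt{(1-|w_1|^2)(1-|w_2|^2)}}{1-\overline{w_2}w_1},
\]
whose numerator is a positive real number and whose denominator is nonzero by the previous step. Therefore $\langle \hat{k}_{w_1}, \hat{k}_{w_2}\rangle \neq 0$, as claimed.

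There is essentially no obstacle here; the entire argument is a one-line computation once the explicit form of the reproducing kernel is invoked. The only thing to be careful about is the placement of the conjugate when writing $\langle k_{w_1}, k_{w_2}\rangle = k_{w_1}(w_2)$ versus $k_{w_2}(w_1)$, but both yield nonzero values by the same modulus bound $|w_1 w_2| < 1$, so the conclusion is unaffected.
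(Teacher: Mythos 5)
Your proposal is correct and follows essentially the same computation as the paper: evaluate $\langle k_{w_1},k_{w_2}\rangle$ via the reproducing property, normalize, and observe that the numerator $\sqrt{(1-|w_1|^2)(1-|w_2|^2)}$ is strictly positive while the denominator is nonzero since $|\overline{w_1}w_2|<1$. The minor conjugate-placement slip (the denominator should read $1-\overline{w_1}w_2$ rather than $1-\overline{w_2}w_1$) is immaterial to the conclusion, as you yourself note.
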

	\begin{proof}
		Let $w_1,w_2\in\mathbb{D}$. Then 
		\[ \langle\hat{k}_{w_1},\hat{k}_{w_2}\rangle = \dfrac{1}{\|k_{w_1}\|\|k_{w_2}\|}\langle k_{w_1},k_{w_2}\rangle = \frac{k_{w_1}(w_2)}{\|k_{w_1}\|\|k_{w_2}\|} = \dfrac{\sqrt{(1-|w_1|^2)(1-|w_2|^2)}}{1-\overline {w_1}w_2}. \]
        Since $w_1,  w_2 \in \mathbb{D},$ $\langle\hat{k}_{w_1},\hat{k}_{w_2}\rangle \neq 0.$
	\end{proof}
    Let $q\in(0,1]$ be fixed and $w_1\in\mathbb{D}$, define the set
	\[	\mathcal{S}_{w_1}=\left\{w_2\in\mathbb{D}:\langle\hat{k}_{w_1},\hat{k}_{w_2}\rangle=q\right\}.\]
		In this regard we have the following result.
	\begin{theorem}\label{imp}
		Let $w_1\in\mathbb{D}$ and $0<q\leq1$. Then the following properties hold.
		\begin{enumerate}
		\item[(i)] $\mathcal{S}_0=\mathbb{T}_{\sqrt{1-q^2}}$ and 
		 $\mathcal{S}_{w_1}=\left\{\lambda^+_{w_1}w_1,\lambda^-_{w_1}w_1\right\}\subseteq\mathbb{D}$, where \[\lambda^{\pm}_{w_1}=\dfrac{|w_1|q^2\pm(1-|w_1|^2)\sqrt{1-q^2}}{|w_1|(1-(1-q^2)|w_1|^2)}\quad \mathrm{for}\ w_1\neq0.\]
		\item[(ii)] $\bigcup\limits_{w_1\in\mathbb{D}}\mathcal{S}_{w_1}=\mathbb{D}$.
	\end{enumerate}
	\end{theorem}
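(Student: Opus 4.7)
The plan is to start from the closed form derived in the preceding lemma,
$$\langle \hat{k}_{w_1}, \hat{k}_{w_2}\rangle = \frac{\sqrt{(1-|w_1|^2)(1-|w_2|^2)}}{1 - \overline{w_1} w_2},$$
and treat the equation $\langle \hat{k}_{w_1}, \hat{k}_{w_2}\rangle = q$ as the defining constraint on $w_2$, splitting the analysis by whether $w_1$ vanishes.

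For $w_1=0$ the formula collapses to $\sqrt{1-|w_2|^2}=q$, which immediately gives $|w_2|=\sqrt{1-q^2}$, so $\mathcal{S}_0=\mathbb{T}_{\sqrt{1-q^2}}$. For $w_1\neq 0$, the numerator is a nonnegative real and the right-hand side $q$ is a positive real, hence $1-\overline{w_1}w_2$ must itself be a positive real. This forces $\overline{w_1}w_2\in\mathbb{R}$, so $w_2=tw_1$ for some $t\in\mathbb{R}$. Writing $a=|w_1|^2$ and squaring, the constraint reduces to a quadratic in $t$,
$$a\bigl(1-(1-q^2)a\bigr)\,t^2 - 2q^2 a\, t + \bigl(a-(1-q^2)\bigr)=0.$$
The main obstacle is showing that the discriminant of this quadratic is a perfect square. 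After direct expansion and exploiting $1=q^2+(1-q^2)$ to cancel the $a$- and $a^2$-terms suitably, the discriminant collapses to $4a(1-q^2)(1-a)^2$. Applying the quadratic formula and then dividing the numerator and denominator by $|w_1|$ produces exactly the two values $\lambda_{w_1}^{\pm}$ listed in the statement, so $w_2=\lambda_{w_1}^{\pm}w_1$.

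It still remains to verify that $\lambda_{w_1}^{\pm}w_1\in\mathbb{D}$ and that squaring introduced no spurious roots. A direct computation gives
$$1-\lambda_{w_1}^{\pm}|w_1|^2=\frac{(1-|w_1|^2)\bigl(1\mp |w_1|\sqrt{1-q^2}\bigr)}{1-(1-q^2)|w_1|^2},$$
which is strictly positive in both cases because $|w_1|\sqrt{1-q^2}<1$; this confirms the sign condition $1-t|w_1|^2>0$ needed when squaring. A similar estimate reduces $|\lambda_{w_1}^{\pm}w_1|<1$ to the same inequality $|w_1|\sqrt{1-q^2}<1$, which holds since $|w_1|<1$ and $0<q\le 1$.

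For part (ii), I would exploit the symmetry apparent in the preceding lemma: the quantity $\langle \hat{k}_{w_1},\hat{k}_{w_2}\rangle$ is a positive real depending symmetrically on $(w_1,w_2)$, so $w_2\in\mathcal{S}_{w_1}$ if and only if $w_1\in\mathcal{S}_{w_2}$. Consequently
$$\bigcup_{w_1\in\mathbb{D}}\mathcal{S}_{w_1}=\{w\in\mathbb{D}:\mathcal{S}_w\neq\emptyset\},$$
and part (i) has already exhibited explicit elements of $\mathcal{S}_w$ for every $w\in\mathbb{D}$, giving equality with $\mathbb{D}$.
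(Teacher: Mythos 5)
Your proof is correct. For part (i) you follow essentially the paper's route --- reduce to $w_2$ being a real scalar multiple of $w_1$ and solve the resulting quadratic, whose discriminant indeed collapses to $4|w_1|^2(1-q^2)(1-|w_1|^2)^2$ --- though you reach the collinearity more cleanly by noting that $1-\overline{w_1}w_2$ must be a positive real, where the paper separates real and imaginary parts in Cartesian coordinates to get the same conclusion. You are in fact more careful than the paper on one point: you verify $1-\lambda^{\pm}_{w_1}|w_1|^2>0$, which is needed to rule out spurious roots introduced by squaring, whereas the paper only asserts $-1/|w_1|<\lambda^{\pm}_{w_1}<1/|w_1|$ without addressing the sign condition. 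The genuine divergence is in part (ii): the paper applies the intermediate value theorem to $f_{\pm}(|w_1|)=\lambda^{\pm}_{w_1}|w_1|$, tracking their values from $\pm\sqrt{1-q^2}$ at $0$ to $1$ as $|w_1|\to1^-$ and then adjusting the argument $\theta$ to hit an arbitrary $re^{i\phi}\in\mathbb{D}$; you instead use that $\langle\hat{k}_{w_2},\hat{k}_{w_1}\rangle=\overline{\langle\hat{k}_{w_1},\hat{k}_{w_2}\rangle}$ and $q$ is real, so $w_2\in\mathcal{S}_{w_1}\iff w_1\in\mathcal{S}_{w_2}$, whence the union is $\{w\in\mathbb{D}:\mathcal{S}_w\neq\emptyset\}=\mathbb{D}$ by part (i). This is shorter and bypasses the IVT computation entirely; the only stylistic caveat is that the inner product is conjugate-symmetric rather than symmetric, so the equivalence should be justified by the reality of $q$ (as your argument implicitly does) rather than by symmetry of the quantity itself. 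The paper's approach, in exchange, produces explicit preimages for each target point, in the spirit of the range computations it performs later for $\lambda^{\pm}_{w_1}|w_1|^2$.
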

	\begin{proof}
	 (i) Let $w_1=x_1+iy_1$, $w_2=x_2+iy_2\in\mathbb{D}$, where $x_1,x_2,y_1,y_2\in\mathbb{R}$ and $\sqrt{x_1^2+y_1^2}<1,\sqrt{x_2^2+y_2^2}<1$.
	  Now, \begin{eqnarray*}  	
	  		&&\langle\hat{k}_{w_1},\hat{k}_{w_2}\rangle=q \\
	  	&	\implies & \dfrac{\sqrt{(1-x_1^2-y_1^2)(1-x_2^2-y_2^2)}}{1-(x_1-iy_1)(x_2+iy_2)}=q \\
	  	&	\implies & \dfrac{(1-x_1x_2-y_1y_2)\sqrt{(1-x_1^2-y_1^2)(1-x_2^2-y_2^2)}}{(1-x_1x_2-y_1y_2)^2+(x_2y_1-x_1y_2)^2}-i\dfrac{(x_2y_1-x_1y_2)\sqrt{(1-x_1^2-y_1^2)(1-x_2^2-y_2^2)}}{(1-x_1x_2-y_1y_2)^2+(x_2y_1-x_1y_2)^2}=q.
	  \end{eqnarray*}
	  Comparing real and imaginary parts we get,
	  \[
	  	\dfrac{(1-x_1x_2-y_1y_2)\sqrt{(1-x_1^2-y_1^2)(1-x_2^2-y_2^2)}}{(1-x_1x_2-y_1y_2)^2+(x_2y_1-x_1y_2)^2}=q
	  \] and
	  \[
	  	\dfrac{(x_2y_1-x_1y_2)\sqrt{(1-x_1^2-y_1^2)(1-x_2^2-y_2^2)}}{(1-x_1x_2-y_1y_2)^2+(x_2y_1-x_1y_2)^2}=0.
	  \] Since, $\sqrt{(1-x_1^2-y_1^2)(1-x_2^2-y_2^2)}\neq 0$, the following equations follow
	  \begin{eqnarray}\label{imaginary}
	  	x_2y_1-x_1y_2=0,
	  \end{eqnarray} and 
	  \begin{eqnarray}\label{real}
	  	\sqrt{(1-x_1^2-y_1^2)(1-x_2^2-y_2^2)}=q(1-x_1x_2-y_1y_2).
	  \end{eqnarray}
      If $w_1=x_1+iy_1=0$ then $|w_2|^2=1-q^2$ and consequently, $\mathcal{S}_0=\mathbb{T}_{\sqrt{1-q^2}}$. If $w_1\neq0$ then from equation~\eqref{imaginary} we get, $x_2=\lambda x_1$ and $y_2=\lambda y_1$ for some $\lambda\in\mathbb{R}$. 
	   Substituting $(x_2,y_2)=(\lambda x_1,\lambda y_1)$ in equation~\eqref{real} we obtain,
	  \begin{eqnarray*}
	  		&&(1-|w_1|^2)(1-\lambda^2 |w_1|^2)=q^2(1-\lambda |w_1|^2)^2 \\
	  		&\implies &\lambda^2(|w_1|^4-|w_1|^2-q^2|w_1|^4)+2q^2\lambda |w_1|^2+(1-|w_1|^2-q^2)=0 .
	  \end{eqnarray*} The roots of the above quadratic equation are $\dfrac{-q^2|w_1|\pm\sqrt{(1-q^2)(1-|w_1|^2)^2}}{(|w_1|^3-|w_1|-q^2|w_1|^3)}$. It is easy to verify that roots lie in $\left(-\dfrac{1}{|w_1|},\dfrac{1}{|w_1|}\right)$. Thus, $w_2=\lambda^{\pm}_{w_1}w_1$ and consequently, $\mathcal{S}_{w_1}=\left\{\lambda^+_{w_1}w_1,\lambda^-_{w_1}w_1\right\}\subseteq\mathbb{D}$, where $\lambda^{\pm}_{w_1}=\dfrac{|w_1|q^2\pm(1-|w_1|^2)\sqrt{1-q^2}}{|w_1|(1-(1-q^2)|w_1|^2)}$.
	  
	  (ii)  Let $w_1=|w_1|e^{i\theta}$ with $0\leq|w_1|<1$. Now, consider the functions $G_\pm:[0,1)\to\mathbb{R}$ defined by
	  \[
	  	G_\pm(|w_1|)=\lambda^{\pm}_{w_1}|w_1|=\dfrac{|w_1|q^2\pm(1-|w_1|^2)\sqrt{1-q^2}}{1-(1-q^2)|w_1|^2}
	  \] where $0\leq|w_1|<1$ and $0<q\leq1$. Now, we have, $G_-(0)=-\sqrt{1-q^2}$, $G_+(0)=\sqrt{1-q^2}$ and $\lim\limits_{|w_1|\to1^-}G_\pm(|w_1|)=1$. Also, $G_\pm$ are continuous on $[0,1)$. Fix any $r\in[0,1)$.
	  
	 \noindent Case 1: $r\in[\sqrt{1-q^2},1)$.
	  
	  Then $G_+(0)=\sqrt{1-q^2}\leq r<1=\lim\limits_{|w_1|\to1^-}G_+(|w_1|)$. Hence, by the Intermediate Value Theorem there exists $|w_1|\in[0,1)$ such that $G_+(|w_1|)=r$.
	  
	\noindent  Case 2: $r\in[0,\sqrt{1-q^2})$.
	  
	  Then $-\sqrt{1-q^2}<-r<0<1$. Hence, by the Intermediate Value Theorem there exists $|w_1|\in[0,1)$ such that $G_-(|w_1|)=-r$.
	  
	 \noindent Combining two cases we get, for every $r\in[0,1)$, there always exists $|w_1|\in[0,1)$ such that $|G_\pm(|w_1|)|=r$. Let $z=re^{i\phi}\in\mathbb{D}$. We have found $|w_1|\in[0,1)$ and $G_\pm(|w_1|)$ with $|G_\pm(|w_1|)|=r$. Now set $\theta=\phi$ if $G_\pm(|w_1|)=r$ and $\theta=\phi+\pi$ if $G_\pm(|w_1|)=-r$. Thus, $\lambda^{\pm}_{w_1}w_1=(\lambda^{\pm}_{w_1}|w_1|)e^{i\theta}=G_\pm(|w_1|)e^{i\theta}=re^{i\phi}=z$. This completes the proof.
	\end{proof}
	 
	 \begin{remark}
	 	From the relation $\langle\hat{k}_{w_1},\hat{k}_{w_2}\rangle=q$, using equations~\eqref{imaginary} and \eqref{real} we get the following relation between $w_1$ and $w_2$:
	 	\begin{eqnarray}\label{neweq}
	 		\sqrt{(1-|w_1|^2)(1-|w_2|^2)}=q(1-\overline{w_1}w_2).
	 	\end{eqnarray} 
	 \end{remark}
	The definition of the $q$-Berezin range of bounded linear operators on Hardy space is as follows.
	  \begin{definition}
	  	Let $T$ be a bounded linear operator on $H^2(\mathbb{D})$ and $0<q\leq1$.
	  		 The $q$-Berezin range of $T$ is defined by
	  		\begin{eqnarray*}
	  			\mathrm{Ber}_q(T)
	  		 := 
	  		 \left\{\langle T\hat{k}_{w_1},\hat{k}_{ w_2}\rangle:w_1\in\mathbb{D},w_2\in\mathcal{S}_{w_1}\right\}.
	 \end{eqnarray*}
	  \end{definition}
	  	 Ber$_q(T)$ can also be alternatively expressed as
	 
	 	\[
	  	\mathrm{Ber}_q(T):=\bigcup\limits_{w_1\in\mathbb{D}}\mathcal{S}^{(T)}_{w_1},\quad \mathrm{where}\quad \mathcal{S}^{(T)}_{w_1}=\left\{\langle T\hat{k}_{w_1},\hat{k}_{w_2}\rangle:w_2\in \mathcal{S}_{w_1}\right\}.\]
	  Let $w_1=x_1+iy_1$, $w_2=x_2+iy_2$. Then from equations~\eqref{imaginary} and \eqref{real} we have, $w_2=\lambda^\pm_{w_1}w_1$ where $\lambda^{\pm}_{w_1}=\dfrac{|w_1|q^2\pm(1-|w_1|^2)\sqrt{1-q^2}}{|w_1|(1-(1-q^2)|w_1|^2)}$ for $0<|w_1|<1$. Hence, $\overline{w_1}w_2=\lambda^{\pm}_{w_1}|w_1|^2$. Also, $w_1=0$ implies $\overline{w_1}w_2=0$.
	 
	  The next lemma is crucial for the remaining part of the paper.
	  	\begin{lemma}\label{range}
	  		 For $0<q\leq1$	we have,
	  		\[\left\{\overline{w_1}w_2:w_1\in\mathbb{D}\setminus\{0\},w_2\in\mathcal{S}_{w_1}\right\}=
	  		\left\{\lambda^{\pm}_{w_1}|w_1|^2:0<|w_1|<1\right\}=\left[\dfrac{q-1}{q+1},1\right).
	  	\] 
	  	\end{lemma}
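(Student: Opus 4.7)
The plan is to introduce the parameters $a := |w_1|^2 \in (0,1)$, $b := |w_2|^2 \in [0,1)$, and $s := \overline{w_1}w_2 = \lambda^{\pm}_{w_1}|w_1|^2 \in \mathbb{R}$, and then exploit the fact that $w_2$ is a real scalar multiple of $w_1$ to obtain $ab = s^2$, while squaring~\eqref{neweq} yields $(1-a)(1-b) = q^2(1-s)^2$. Expanding the latter and substituting $ab = s^2$ gives
\[
a + b = 1 + s^2 - q^2(1-s)^2 =: C(s),
\]
so that $a$ and $b$ are the two non-negative roots of the quadratic $x^2 - C(s)x + s^2 = 0$. The lemma then reduces to determining precisely for which $s$ this quadratic has a root in $(0,1)$.

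For the inclusion $\subseteq$, the positivity of the left side of~\eqref{neweq} forces $s < 1$. For the lower bound I would invoke AM-GM to obtain $C(s) \geq 2|s|$, and then use the two factorizations
\[
C(s) - 2s = (1-s)^2(1-q^2), \qquad C(s) + 2s = \bigl[(1-q)+(1+q)s\bigr]\bigl[(1+q)+(1-q)s\bigr]
\]
to show that $C(s) \geq 2|s|$ is equivalent to $s \geq \tfrac{q-1}{q+1}$. For $s \geq 0$ the first identity is non-negative automatically, while for $s < 0$ the second bracket $(1+q)+(1-q)s$ is strictly positive on $(-1,0)$, so the sign of $C(s)+2s$ is governed by the vanishing of $(1-q)+(1+q)s$ at $s = \tfrac{q-1}{q+1}$.

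For the reverse inclusion, given $s \in [\tfrac{q-1}{q+1},1)$ I would reverse the construction. The same factorizations identify the discriminant $C(s)^2 - 4s^2$ as the product of the two brackets above, hence non-negative, so $x^2 - C(s)x + s^2 = 0$ has real roots. These lie in $[0,1)$: from the identity $C(s) - 2 = -(1-s)\bigl[(1+s) + q^2(1-s)\bigr] < 0$ we get $C(s) < 2$, and evaluating the quadratic at $x = 1$ gives $q^2(1-s)^2 > 0$, so both roots are on the same side of $1$, namely below. When $s \neq 0$ the product $s^2 > 0$ forces both roots to be strictly positive, while for $s = 0$ the nonzero root $1 - q^2$ is strictly positive provided $q < 1$. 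Taking $|w_1|^2 = a$ equal to any such root in $(0,1)$ and setting $w_2 = (s/a)w_1$ then realizes $s$ as $\lambda^{\pm}_{w_1}|w_1|^2$.

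\textbf{Main obstacle.} The hardest step is uncovering the clean factorizations of $C(s) \pm 2s$ and of the discriminant that single out $\tfrac{q-1}{q+1}$; once those identities are in hand, both inclusions are forced by AM-GM together with an elementary root analysis of a quadratic with controlled sum and product. A minor boundary subtlety is $q = 1$: then $\lambda^{+}_{w_1} = \lambda^{-}_{w_1} = 1$ and the set collapses to $(0,1)$, so the left endpoint $0 = \tfrac{q-1}{q+1}$ is attained only as a limit as $|w_1| \to 0^+$.
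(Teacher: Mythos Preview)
Your argument is correct and takes a genuinely different route from the paper's. The paper works directly with the explicit functions
\[
\phi(r)=\lambda^{-}_{w_1}|w_1|^{2},\qquad \psi(r)=\lambda^{+}_{w_1}|w_1|^{2},\qquad r=|w_1|\in(0,1),
\]
computes $\phi'(r)$, solves $\phi'(r)=0$ to locate the unique critical point $r=\sqrt{(1-q)/(1+q)}$ in $(0,1)$, verifies it is a minimum with value $(q-1)/(q+1)$, and separately shows $\psi$ is strictly increasing with $\lim_{r\to1^-}\psi(r)=1$; continuity then gives the interval. You instead reparametrize by the \emph{target} value $s=\overline{w_1}w_2$ and turn the question into: for which $s$ does the quadratic $x^{2}-C(s)x+s^{2}=0$ admit a root $a=|w_1|^{2}\in(0,1)$? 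The factorizations
\[
C(s)-2s=(1-s)^{2}(1-q^{2}),\qquad C(s)+2s=\bigl[(1-q)+(1+q)s\bigr]\bigl[(1+q)+(1-q)s\bigr]
\]
then do all the work via AM--GM and Vieta, with no differentiation at all. Your approach is arguably cleaner: the threshold $\tfrac{q-1}{q+1}$ emerges directly as the zero of the linear factor $(1-q)+(1+q)s$ rather than as the value of $\phi$ at a critical point found by solving a quartic in $r$. The paper's calculus, on the other hand, has the modest advantage of explicitly identifying the extremizing modulus $|w_1|=\sqrt{(1-q)/(1+q)}$ (which your discriminant-vanishing condition also recovers, since there $a=b=C(s)/2=(1-q)/(1+q)$). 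You and the paper agree on the $q=1$ boundary subtlety: both proofs obtain $(0,1)$ rather than $[0,1)$ in that case.
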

	  	\begin{proof} If $q=1$ then clearly $\lambda^\pm_{w_1}=1$ and so $	\left\{|w_1|^2:0<|w_1|<1\right\}=\left(0,1\right)$.
	  		  For $0<|w_1|<1$ and $0<q<1$ it is obvious that $\lambda^-_{w_1}|w_1|^2\leq\lambda^+_{w_1}|w_1|^2$. Let $|w_1|=r$ and $k=\sqrt{1-q^2}$. Consider the function $\phi:(0,1)\to\mathbb{R}$ defined by
	  	\begin{eqnarray*}
	  		\phi(r)=\lambda^-_{w_1}r^2=\dfrac{r^2(1-k^2)-rk(1-r^2)}{1-k^2r^2}
            = \dfrac{r(r-k)}{1-kr}.
	  	\end{eqnarray*} Now,
	  	\begin{eqnarray*} 
	  			&&\phi'(r)=
	  			 \dfrac{2r-k-kr^2}{(1-kr)^2}=0 \\
	  			&\implies & r=\dfrac{1\pm\sqrt{1-k^2}}{k}=\sqrt{\dfrac{1+q}{1-q}},\sqrt{\dfrac{1-q}{1+q}}.
	  	\end{eqnarray*} Since, $0<r<1$ we have, $r=\sqrt{\dfrac{1-q}{1+q}}$, and $\phi''\left(\sqrt{\dfrac{1-q}{1+q}}\right)=\dfrac{1}{q}>0$ for $0<q<1$. Hence, 
	  	\[
	  			\phi\left(\sqrt{\dfrac{1-q}{1+q}}\right)=\dfrac{q^2\dfrac{1-q}{1+q}-\sqrt{\dfrac{1-q}{1+q}}\left(1-\dfrac{1-q}{1+q}\right)\sqrt{1-q^2}}{1-\dfrac{1-q}{1+q}(1-q^2)}
	  			= \dfrac{q-1}{q+1}.
	  	\]Also, for $0<r<1$, $0<q<1$ we have $\lambda^+_{w_1}r^2>0$. Consider the function $\psi:(0,1)\to\mathbb{R}$ defined by 
	  	\[
	  		\psi(r)=\lambda^+_{w_1}r^2=\dfrac{r^2(1-k^2)+rk(1-r^2)}{1-k^2r^2}=\dfrac{r(r+k)}{1+kr}. \]
	  		We have,\[ \psi'(r)=\dfrac{2r+k+kr^2}{(1+kr)^2} >0.
	  	\] This implies $\psi(r)$ is an increasing function of $r$. Also, $\lim\limits_{r\to1^-}\psi(r)=1$. 
	  	Thus, \[\{\lambda^{\pm}_{w_1}|w_1|^2:0<|w_1|<1\}=\left[\dfrac{q-1}{q+1},1\right).\]
	  \end{proof}

     \section{Convexity of $q$-Barezin range of some classes of operators}\label{sec3}
In this section, we investigate the criteria that ensure the convexity of the $q$-Berezin range for several important classes of operators, including diagonal, Toeplitz, weighted shift, and composition operators. We begin with an important observation in the finite-dimensional setting, which highlights certain structural limitations of the $q$-Berezin range.

Consider $\mathbb{C}^n$ as a reproducing kernel Hilbert space on the finite set $\Omega = \{1,2,\dots,n\}$, where each vector $v = (v_1, v_2, \dots, v_n) \in \mathbb{C}^n$ is identified with a function on $\Omega$ via $v(j) = v_j$. Let $\{e_j\}_{j=1}^n$ denote the standard orthonormal basis of $\mathbb{C}^n$, so that
\[
e_j(i) =
\begin{cases}
1, & \text{if } i = j, \\
0, & \text{if } i \neq j.
\end{cases}
\]
Then $\mathbb{C}^n$ is a reproducing kernel Hilbert space with reproducing kernel given by
\[
k_j(i) = \langle e_j, e_i \rangle_{\mathbb{C}^n}, \quad i,j \in \Omega.
\]
In particular, each kernel function is normalized, that is, $k_j = \hat{k}_j$ for all $j = 1,2,\dots,n$.

In order to determine the $q$-Berezin range of $A = (a_{ij}) \in M_n(\mathbb{C})$, one requires the normalized reproducing kernels to satisfy
\[
\langle \hat{k}_j, \hat{k}_i \rangle_{\mathbb{C}^n} = q, \quad \text{for } 0 < q \leq 1.
\]
However, in this setting, we have
\[
\langle \hat{k}_j, \hat{k}_i \rangle_{\mathbb{C}^n} \in \{0,1\},
\]
which immediately implies the following

\[
\mathrm{Ber}_q(A) =
\begin{cases}
\emptyset, & \text{if } 0<q<1\\
\mathrm{diag}(A), & \text{if } q=1,\\
\{a_{ij} : i \neq j\}, & \text{if } q=0.
\end{cases}
\]
Consequently, $\mathrm{Ber}_1(A)$ is convex if and only if $A$ has constant diagonal entries, and $\mathrm{Ber}_0(A)$ is convex if and only if $a_{ij}$'s are constant for $i \neq j.$

This observation motivates the study of infinite-dimensional settings, where richer geometric behavior of the $q$-Berezin range may be observed. We start with diagonal operators.

\subsection{Diagonal operators}
	Let $T \in \mathscr{B}(H^2(\mathbb{D}))$ be a diagonal operator. Then with respect to the basis $\{e_n\}$, $e_n(z)=z^n$, $T$ can be represented by
\[
T(f)=\sum_{n=0}^{\infty}\alpha_n a_n z^n
=\sum_{n=0}^{\infty}\alpha_n \langle f, z^n\rangle z^n,
\]
for $f(z)=\sum\limits_{n=0}^{\infty} a_n z^n$, where $\{\alpha_n\}_{n=0}^{\infty}$ is a bounded sequence in $\mathbb{C}$.
\begin{theorem}\label{third}
Let $T(f)=\sum\limits_{n=0}^{\infty}\alpha_n\langle f,z^n\rangle z^n,f\in H^2(\mathbb{D})$, where $\{\alpha_n\}_{n=0}^\infty$ is a bounded sequence in $\mathbb{C}$ and $0<q\leq1$. Then the following statements are equivalent:
\begin{enumerate}
     \item[(i)] The set $\mathrm{Ber}_q(T)$ is convex.
      \item[(ii)] The set $\mathrm{Ber}_q(T)$ is a line-segment in $\mathbb{C}$.
    \item[(iii)] All $\alpha_n$, $n\geq0$ are lying on a straight line in $\mathbb{C}$.
\end{enumerate}
\end{theorem}
\begin{proof}
     	Let $w_1\in\mathbb{D}$ and $w_2\in \mathcal{S}_{w_1}$, that is, $\langle\hat{k}_{w_1},\hat{k}_{w_2}\rangle=q$. Then
     	\begin{eqnarray}
     		\langle T\hat{k}_{w_1},\hat{k}_{w_2}\rangle 
     		&=&	\sqrt{(1-|w_1|^2)(1-|w_2|^2)}\left\langle\sum_{n=0}^{\infty}\alpha_n\langle k_{w_1},z^n\rangle z^n,k_{w_2}\right\rangle\nonumber\\
     		& =& q(1-\overline{w_1}w_2)\sum_{n=0}^{\infty}\alpha_n(\overline{w_1}w_2)^n\quad  \text{(using equation~\eqref{neweq})}\nonumber.
     	\end{eqnarray} In  particular, if $w_1=0$ then $\langle T\hat{k}_{w_1},\hat{k}_{w_2}\rangle =q\alpha_0$. Now, from Lemma~\ref{range} we have,
     	\begin{eqnarray}\label{finite}
     \  \mathrm{Ber}_q(T)=\left\{q(1-t)\sum_{n=0}^\infty\alpha_n t^n:t\in\left[\dfrac{q-1}{q+1},1\right)\right\}.
     	\end{eqnarray}
        Since, Ber$_q(T)$ is the image of $\left[\dfrac{q-1}{q+1},1\right)$ under a continuous function, it is convex if and only if it is a line-segment in $\mathbb{C}$. This proves (i)$\iff$(ii).
        
For (ii)$\iff$(iii), let $F(t)=q(1-t)\sum\limits_{n=0}^\infty\alpha_nt^n$ and $\alpha_n=a_n+ib_n$ for all $n\geq0$. Then \begin{eqnarray*}
 F(t)&=&q(1-t)\sum\limits_{n=0}^\infty a_nt^n+iq(1-t)\sum\limits_{n=0}^\infty b_nt^n\\
 &=&g(t)+ih(t),
 \end{eqnarray*}where $g(t)=q(1-t)\sum\limits_{n=0}^\infty a_nt^n$ and $h(t)=q(1-t)\sum\limits_{n=0}^\infty b_nt^n$.
This represents a line-segment in $\mathbb{C}$ if and only if $Ag(t)+Bh(t)=C$ holds for all $t\in\left[\dfrac{q-1}{q+1},1\right)$ and for some $A,B,C\in\mathbb{R}$ with $(A,B)\neq(0,0)$. Now, 
\begin{eqnarray*}
    &&Ag(t)+Bh(t)=C\quad \mathrm{for\ all}\ t\in \left[\dfrac{q-1}{q+1},1\right)\\
    &\iff& q(1-t)\left(\sum_{n=0}^\infty(Aa_n)t^n+\sum_{n=0}^\infty(Bb_n)t^n\right)=C\quad \mathrm{for\ all}\ t\in \left[\dfrac{q-1}{q+1},1\right)\\
    &\iff& \sum_{n=0}^\infty(Aa_n+Bb_n)t^n=\dfrac{C}{q}\sum_{n=0}^\infty t^n \quad \mathrm{for\ all}\ t\in \left[\dfrac{q-1}{q+1},1\right)\\
    &\iff& Aa_n+Bb_n=\dfrac{C}{q},\quad\ \mathrm{for\ all}\ n\geq0,
\end{eqnarray*} where the final equivalence follows from the Uniqueness Theorem for Power Series. Hence, $\mathrm{Ber}_q(T)$ is a line-segment in $\mathbb{C}$ if and only if all $\alpha_n$, $n\geq0$ are lying on a straight line in $\mathbb{C}$.
\end{proof}
\begin{corollary}
Let $T(f)=\sum\limits_{n=0}^{\infty}\alpha_n\langle f,z^n\rangle z^n,f\in H^2(\mathbb{D})$, where $\{\alpha_n\}_{n=0}^\infty$ is a bounded sequence in $\mathbb{C}$ and $0<q\leq1$. Then
    \begin{enumerate}
        \item [(i)] Ber$_q(T)$ is a singleton if and only if $\alpha_n=\alpha_0$, for all $n\geq0$ and for some $\alpha_0\in\mathbb{C}$,
        \item [(ii)] $\Im\left\{\mathrm{Ber}_q(T)\right\}=\{0\}$ if and only if $\Im\{\alpha_n\}=0$.
    \end{enumerate}
\end{corollary}
\begin{proof}
    (i) If $\alpha_n=\alpha_0$, for all $n\geq0$, then from equation~\eqref{finite} we have, Ber$_q(T)=\{q\alpha_0\}$. Conversely, let Ber$_q(T)$ is a singleton. As, $q(1-t)\sum\limits_{n=0}^\infty\alpha_n t^n=q\alpha_0$ at $t=0$ then $\mathrm{Ber}_q(T)=\{q\alpha_0\}$, for all $t\in\left[\dfrac{q-1}{q+1},1\right).$ This implies \[\sum\limits_{n=0}^\infty\alpha_n t^n=\alpha_0\sum\limits_{n=0}^\infty t^n,\quad\ \mbox{for all}\ t\in\left[\dfrac{q-1}{q+1},1\right).\] By the Uniqueness Theorem for Power Series it follows that $\alpha_n=\alpha_0$ for all $n\geq0$.

    (ii) Let $\Im\left\{\mathrm{Ber}_q(T)\right\}=0$. Then $q(1-t)\sum\limits_{n=0}^\infty\alpha_n t^n\in\mathbb{R}$ for all $t\in\left[\dfrac{q-1}{q+1},1\right)$. Considering $\alpha_n=a_n+ib_n$, we must have, $\sum\limits_{n=0}^\infty b_n t^n=0$ for all $t\in\left[\dfrac{q-1}{q+1},1\right)$. This follows that $b_n=0$ for all $n\geq0$ by the Uniqueness Theorem for Power Series. The converse part is obvious.
\end{proof}
      \begin{example}
           Let $\alpha_n= 1+i^n$. Then we have, \[\mathrm{Ber}_q(T)=\left\{q\left(1+\dfrac{1-t}{1-it}\right):t\in\left[\dfrac{q-1}{q+1},1\right)\right\},\] which is not convex as all $\alpha_n$ are not lying on a straight line in $\mathbb{C}$. This also can be verified from the following figure for several values of $q$.
     		\begin{figure}[H]
     			\begin{center}
     			\includegraphics[width=0.50\columnwidth]{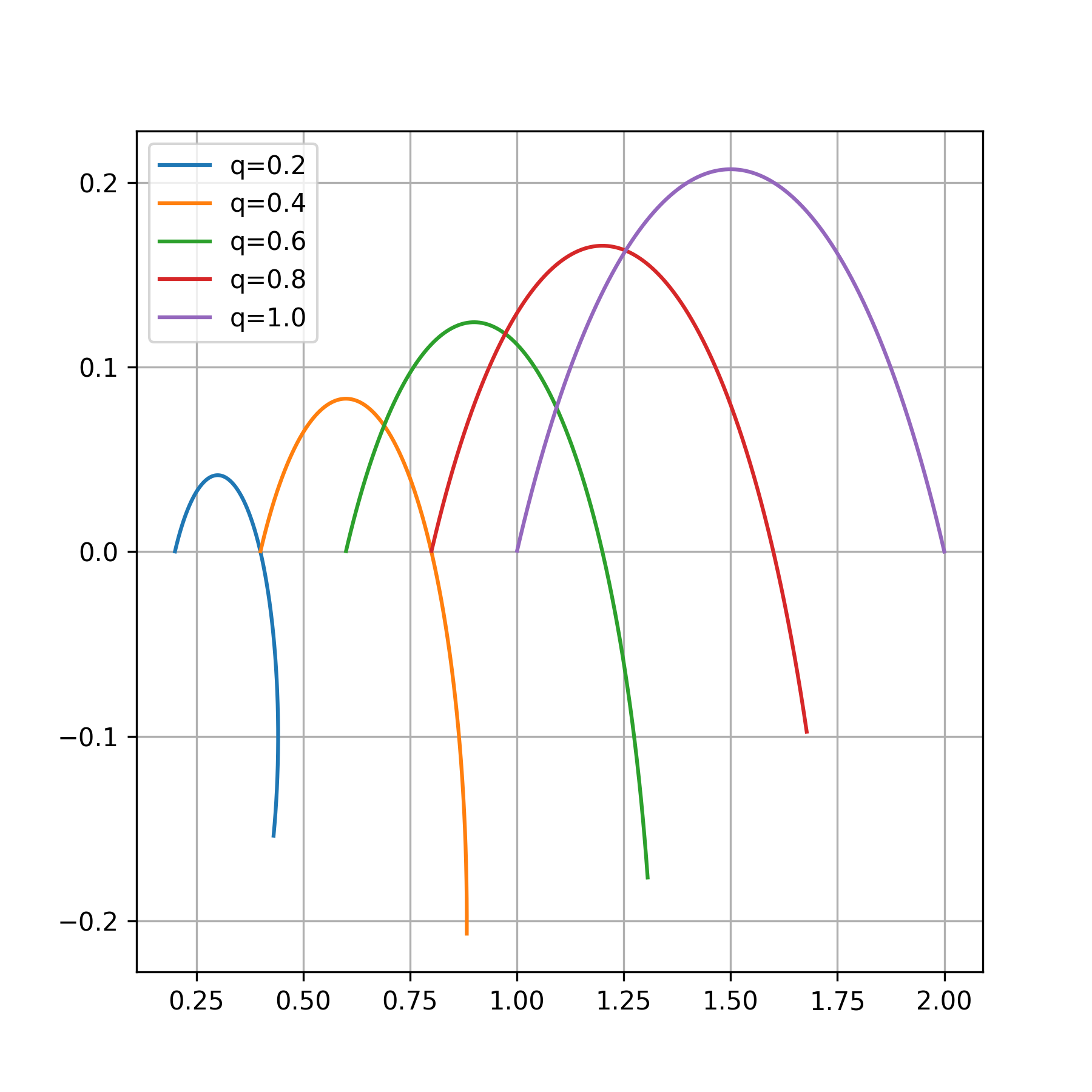}
     			\caption{Ber$_q(T)$ on $H^2(\mathbb{D})$ for $q=0.2,0.4,0.6,0.8,1.0$ and $\alpha_n=1+i^n$.}
                \label{FIG:1}
                \end{center}
     		\end{figure}
     \end{example}
     The $q$-Berezin range of diagonal operators helps us to establish an important observation, that the $q$-Berezin range does not enjoy the unitary invariance property, unlike the $q$-numerical range.
     	\begin{corollary}\label{diagcoro}
        $q$-Berezin range is not unitarily invariant.
     \end{corollary}
     \begin{proof}
      Let $T_1=\langle f,z\rangle z$ and $T_2=\langle f,z^2\rangle z^2$ be rank one operators on $H^2(\mathbb{D})$, and define a unitary operator $U$ by $U(z)=z^2$, $U(z^2)=z$, and $U(z^n)=z^n$ for $n\neq1,2$. Then $T_2=UT_1U^*$. Now, we need to find Ber$_q(T_1)$ and Ber$_q(T_2)$. To this end, let $T(f)=\langle f,z^k\rangle z^k$, $k\in\mathbb{N}$. Then from equation~\eqref{finite} 	\[
     	\mathrm{Ber}_q(T)=\left\{q(1-t)t^k:t\in\left[\dfrac{q-1}{q+1},1\right)\right\}.
     	\] For every $k\in\mathbb{N}$, 
$\displaystyle \max \left\{q(1-t)t^k:t\in\left[\dfrac{q-1}{q+1},1\right)\right\}
= \frac{q}{k+1}\left(\frac{k}{k+1}\right)^k$, and
\[\displaystyle \min \left\{q(1-t)t^k:t\in\left[\dfrac{q-1}{q+1},1\right)\right\}
=
\begin{cases}
\dfrac{2q}{q+1}\left(\dfrac{q-1}{q+1}\right)^k, & k\ \text{odd},\\
0, & k\ \text{even.}
\end{cases}\] 
Hence, \[
\mathrm{Ber}_q(T)=
\begin{cases}
\left[\dfrac{2q}{q+1}\left(\dfrac{q-1}{q+1}\right)^k,\; 
\dfrac{q}{k+1}\left(\dfrac{k}{k+1}\right)^k\right], 
& \text{for odd } k, \\
\left[0,\; \dfrac{q}{k+1}\left(\dfrac{k}{k+1}\right)^k\right], 
& \text{for even } k.
\end{cases}
\] Thus, Ber$_q(T_1)=\left[\dfrac{2q(q-1)}{(q+1)^2},\dfrac{q}{4}\right]$ and Ber$_q(T_2)=\left[0,\dfrac{4q}{27}\right]$, which are different for $0<q\leq1$. 
     \end{proof}

\subsection{Toeplitz operators}
 Let $L^2(\mathbb{T})$ denote the Hilbert space of square-integrable functions on the unit circle $\mathbb{T}$. The Hardy space $\widetilde{H}^2$ is defined as the closed subspace
\[
\widetilde{H}^2
=
\left\{
f\in L^2(\mathbb{T}) :
f(e^{i\theta})=\sum_{n=0}^{\infty} a_n e^{in\theta},
\quad \sum_{n=0}^{\infty}|a_n|^2<\infty
\right\}.
\]

The space $\widetilde{H}^2$ can be naturally identified with the Hardy space $H^2(\mathbb{D})$ of analytic functions on the unit disc $\mathbb{D}$. Indeed, for each
$f(e^{i\theta})=\sum\limits_{n=0}^{\infty} a_n e^{in\theta}\in \widetilde{H}^2,$ we associate the analytic function; $
\widetilde{f}(z)=\sum\limits_{n=0}^{\infty} a_n z^n$, $z\in\mathbb{D}.$ This correspondence defines an isometric isomorphism between $\widetilde{H}^2$ and $H^2(\mathbb{D})$.

For $0<r<1$, define
\[
f_r(e^{i\theta})=\widetilde{f}(re^{i\theta})
=
\sum_{n=0}^{\infty} a_n r^n e^{in\theta}.
\]
Then $f_r\in \widetilde{H}^2$ for every $0<r<1$, and the following approximation result holds.

\begin{lemma}\cite[Theorem~1.1.10]{martinez2007introduction}
Let $f\in \widetilde{H}^2$, and let $f_r$ be defined as above. Then
\[
\lim_{r\to1^-}\|f-f_r\|_{L^2(\mathbb{T})}=0.
\]
\end{lemma}

The following result justifies the identification of $H^2(\mathbb{D})$ with $\widetilde{H}^2$ via radial boundary values.

\begin{lemma}\cite[Corollary~1.1.28]{martinez2007introduction}
If $\widetilde{f}\in H^2(\mathbb{D})$, then
\[
\lim_{r\to1^-}\widetilde{f}(re^{i\theta})=f(e^{i\theta})
\]
for almost every $e^{i\theta}\in\mathbb{T}$, where $f\in\widetilde{H}^2$ is the boundary function associated with $\widetilde{f}$.
\end{lemma}

Moreover, the values of $\widetilde{f}$ in the unit disc are recovered from its boundary values through the Poisson integral formula:
\[
\widetilde{f}(re^{i\theta})
=
\frac{1}{2\pi}
\int_{0}^{2\pi}
\frac{1-r^2}{1-2r\cos(\theta-t)+r^2}
\,f(e^{it})\,dt,
\quad re^{i\theta}\in\mathbb{D}.
\]

Let $P_+$ denote the orthogonal projection from $L^2(\mathbb{T})$ onto $\widetilde{H}^2$, and set $P_-=I-P_+$. For $\phi\in L^\infty(\mathbb{T})$, the Toeplitz operator $T_\phi$ with symbol $\phi$ is defined by
\[
T_\phi f=P_+(\phi f),
\qquad f\in \widetilde{H}^2.
\]
Then $T_\phi$ is a bounded linear operator on $\widetilde{H}^2$, satisfying
\[
\|T_\phi\|\le \|\phi\|_\infty.
\]
Via the identification $\widetilde{H}^2\cong H^2(\mathbb{D})$, we regard $T_\phi$ as a bounded linear operator on $H^2(\mathbb{D})$.

\begin{proposition}~\label{new prop} 
    Let $\phi\in L^\infty(\mathbb{T})$. Then the $q$-Berezin range of the Toeplitz operator $T_\phi$ is given by
    \[\mathrm{Ber}_q(T_{\phi})=\left\{\dfrac{q}{2\pi}\int_0^{2\pi}\displaystyle\dfrac{\phi(e^{it)}(1-\overline{w_1}w_2)}{(1-\overline{w_1}e^{it})(1-w_2e^{-it})}\, dt:w_1\in\mathbb{D},w_2\in\mathcal{S}_{w_1}\right\}.\]
\end{proposition}
\begin{proof}
    Let $w_1\in\mathbb{D}$ and $w_2\in\mathcal{S}_{w_1}$, that is, $\langle \hat{k}_{w_1},\hat{k}_{w_2}\rangle=q$. Then
    \begin{eqnarray*}
       \langle T_{\phi}\hat{k}_{w_1},\hat{k}_{w_2}\rangle &=&\langle \phi\hat{k}_{w_1},P_+\hat{k}_{w_2}\rangle\\
       &=&\langle \phi\hat{k}_{w_1},\hat{k}_{w_2}\rangle\\
       &=&\dfrac{\sqrt{(1-|w_1|^2)(1-|w_2|^2)}}{2\pi}\int_0^{2\pi}\displaystyle\dfrac{\phi(e^{it})}{(1-\overline{w_1}e^{it})(1-w_2e^{-it})}\, dt\\
       &=&\dfrac{q}{2\pi}\int_0^{2\pi}\displaystyle\dfrac{\phi(e^{it)}(1-\overline{w_1}w_2)}{(1-\overline{w_1}e^{it})(1-w_2e^{-it})}\, dt \quad\text{(using equation~\eqref{neweq})}.
    \end{eqnarray*}
    Hence the result follows.
\end{proof}

 In \cite{englivs1995toeplltz}, it is established that the Berezin transform of Toeplitz operators over Hardy spaces is the Poisson integral of its symbol. For $q=1$ this result follows form Proposition \ref{new prop}.
 \begin{corollary}
  Let $\phi\in L^\infty(\mathbb{T})$. Then 
  \[\mathrm{Ber}(T_\phi)=\left\{\dfrac{1}{2\pi}\displaystyle\int_0^{2\pi}\dfrac{\phi(e^{it)}(1-|w_1|^2)}{|1-\overline{w_1}e^{it}|^2}\, dt:w_1\in\mathbb{D}\right\}.\] 
  \end{corollary}
  \begin{proof}
Considering $q=1$ Theorem~\ref{imp} yields $w_1=0$ implies $w_2=0$ and also for $w_1\neq0$, we have $\lambda_{w_1}^\pm=1$ and hence, $w_1=w_2$. Then the integral $\widetilde\phi(w_1,w_2)$ reduces to 
  \[\widetilde\phi(w_1):=\dfrac{1}{2\pi}\displaystyle\int_0^{2\pi}\dfrac{\phi(e^{it)}(1-|w_1|^2)}{|1-\overline{w_1}e^{it}|^2}\, dt,\] which is the harmonic extension of $\phi$ into $\mathbb{D}$ such that the Poisson kernel, $P(w_1,e^{it})=\dfrac{1-|w_1|^2}{|1-\overline{w_1}e^{it}|^2}$. Thus, 
  \[\mathrm{Ber}(T_\phi)=\{\widetilde\phi(w_1):w_1\in\mathbb{D}\}\quad .\] 
  
 \end{proof}
 \begin{example}
     Consider a Toeplitz operator with the symbol $\phi(e^{it})=2e^{it}$ then from Proposition~\ref{new prop} we have,
     \[\mathrm{Ber}_q(T_{\phi})=\left\{\dfrac{q}{2\pi}\int_0^{2\pi}\displaystyle\dfrac{2e^{it}(1-\overline{w_1}w_2)}{(1-\overline{w_1}e^{it})(1-w_2e^{-it})}\, dt:w_1\in\mathbb{D},w_2\in\mathcal{S}_{w_1}\right\}.\]
     Let $z=e^{it}$ then $dt=\dfrac{dz}{iz}$ and $\dfrac{1}{z}=e^{-it}$. Then by a simple calculation we obtain,
     \begin{eqnarray*}
         \dfrac{q}{2\pi}\int_0^{2\pi}\displaystyle\dfrac{2e^{it}(1-\overline{w_1}w_2)}{(1-\overline{w_1}e^{it})(1-w_2e^{-it})}\, dt= \dfrac{q}{2\pi i}\oint_{|z|=1}\dfrac{2z(1-\overline{w_1}w_2)}{(1-\overline{w_1}z)(z-w_2)}\, dz.
     \end{eqnarray*}  Now, the integrand has pole only at $z=w_2$ inside the unit circle. Hence, by the Residue Theorem, we get, 
     \[\dfrac{q}{2\pi i}\oint_{|z|=1}\dfrac{2z(1-\overline{w_1}w_2)}{(1-\overline{w_1}z)(z-w_2)}\, dz=\dfrac{2qw_2(1-\overline{w_1}w_2)}{(1-\overline{w_1}w_2)}=2qw_2.\]
     Thus, \[\mathrm{Ber}_q(T_\phi)=\bigcup_{w_1\in\mathbb{D}}\{2qw_2:w_2\in\mathcal{S}_{w_1}\}=2q\mathbb{D}.\] Hence, the $q$-Berezin range of $T_\phi$ is an open disc centered at origin with radius $2q$.
 \end{example}
 
 With reference to Proposition~\ref{new prop} we can define $q$-Poisson kernel, the $q$-analouge of Poisson kernel, as
 \[P(w_1,w_2,e^{it}):=\dfrac{(1-\overline{w_1}w_2)}{(1-\overline{w_1}e^{it})(1-w_2e^{-it})}.\] 
 It also satisfies the relation
 \[\dfrac{1}{2\pi}\int_0^{2\pi}\displaystyle P(w_1,w_2,e^{it}) = 1.\]
 Also, in a similar manner, the $q$-Poisson transform of $\phi \in L^\infty(\mathbb{T})$ is defined as \[\widetilde\phi(w_1,w_2):=\dfrac{1}{2\pi}\displaystyle\int_0^{2\pi}\dfrac{\phi(e^{it)}(1-\overline{w_1}w_2)}{(1-\overline{w_1}e^{it})(1-w_2e^{-it})}\, dt\]
 where $\langle \hat{k}_{w_1},\hat{k}_{w_2}\rangle=q.$ It is straightforward to verify that when $q=1$, the $q$-Poisson kernel and the $q$-Poisson transform coincide with the classical Poisson kernel and Poisson transform respectively.

\begin{remark} 
From Proposition \ref{new prop}, it follows that the $q$-Berezin range of $T_\phi$ is the range of the $q$-multiple of the $q$-Poisson transform $\widetilde\phi(w_1,w_2)$ of the symbol $\phi$ and the $q$-berezin number is given by

\[ \mbox{ber}_q(T_\phi)=\sup\limits_{w_1\in\mathbb{D},w_2\in\mathcal{S}_{w_1}}|q\widetilde\phi(w_1,w_2)|.\]
\end{remark}

Let $\phi\in L^\infty(\mathbb{T})$ be harmonic in $\mathbb{D}$ then we denote $f=P_+\phi$ and $g=P_-\phi$ such that $f,\overline{g}\in H^2(\mathbb{D})$. Hence, we have, $\phi=f+g$ and $T_\phi=T_f+T_g$. 
\begin{corollary}\label{new thm}
    Let $T_\phi$ be the Toeplitz operator with symbol $\phi\in L^\infty(\mathbb{T})$ and $\phi$ be harmonic in $\mathbb{D}$. Then the $q$-Berezin range of $T_\phi$ is given by
    \[\mathrm{Ber}_q(T_\phi)=\left\{q(f(w_2)+g(w_1)):w_1\in\mathbb{D},w_2\in\mathcal{S}_{w_1}\right\}.\]
\end{corollary}
\begin{proof}
Let $w_1\in \mathbb{D}$ and $w_2\in \mathcal{S}_{w_1}$, that is, $\langle \hat{k}_{w_1},\hat{k}_{w_2}\rangle=q$. Then for $f\in H^2(\mathbb{D})$, we have, 
      	\[
      	  \langle T_{f}\hat{k}_{w_1},\hat{k}_{w_2}\rangle 
        = \dfrac{1}{\|k_{w_1}\|\|k_{w_2}\|}\langle fk_{w_1},k_{w_2}\rangle 
        = \dfrac{1}{\|k_{w_1}\|\|k_{w_2}\|}f(w_2)\langle k_{w_1},k_{w_2}\rangle 
      	= qf(w_2). 
\] Also, for $\overline{g}\in H^2(\mathbb{D})$, we get, 
\[\langle T_{g}\hat{k}_{w_1},\hat{k}_{w_2}\rangle 
        = \dfrac{1}{\|k_{w_1}\|\|k_{w_2}\|}\langle k_{w_1},\overline{g}k_{w_2}\rangle 
        = \dfrac{1}{\|k_{w_1}\|\|k_{w_2}\|}g(w_1)\langle k_{w_1},k_{w_2}\rangle 
      	= qg(w_1). \] 
Therefore, the result follows.
\end{proof}
\begin{remark}
    The $q$-Berezin number, ber$_q(T_\phi)=\sup\limits_{w_1\in\mathbb{D},w_2\in\mathcal{S}_{w_1}}|q(f(w_2)+g(w_1))|$.
\end{remark}


A Toeplitz operator $T_\phi$ is said to be \emph{analytic} if its symbol $\phi$ belongs to $H^\infty(\mathbb{D})$, the algebra of bounded analytic functions on $\mathbb{D}$. In this case, multiplication by $\phi$ leaves $H^2(\mathbb{D})$ invariant, and hence
\[
T_\phi f = \phi f, \qquad f \in H^2(\mathbb{D}).
\]
Accordingly, we denote $T_\phi$ by $M_\phi$ whenever $\phi \in H^\infty(\mathbb{D})$, emphasizing that $M_\phi$ acts as the multiplication operator
\[
(M_\phi f)(z) = \phi(z) f(z), \qquad z \in \mathbb{D}.
\]

A Toeplitz operator is called \emph{coanalytic} if its symbol is of the form $\phi = \overline{\psi}$ for some $\psi \in H^\infty(\mathbb{D})$. Equivalently, $T_\phi^* = T_\psi$ is an analytic Toeplitz operator.

An immediate consequence is the following corollary.
\begin{corollary}
Ber$_q(M_\phi)$ is convex if and only if $\phi(\mathbb{D})$ is convex.
\end{corollary}
\begin{proof}
 From the proof of Corollary~\ref{new thm} we have, 
\begin{eqnarray}\label{multeq}
    \mathrm{Ber}_q(M_\phi)
      		=\bigcup\limits_{w_1\in\mathbb{D}}\left\{q\phi(w_2):w_2\in \mathcal{S}_{w_1}\right\}
      		=q\phi(\mathbb{D}).
      	\end{eqnarray}
        This completes the proof.
      \end{proof}
      \begin{remark}
It is known that for $q=1$, the Berezin range $\mathrm{Ber}(M_\phi)$ is convex if and only if $\phi(\mathbb{D})$ is convex (see \cite[Proposition~3.2]{cowen2022convexity}).
\end{remark}
In the following examples we characterize the $q$-Berezin range of particular classes of Toeplitz operators.
      \begin{example}\label{shift}
      Let $T_\phi$ be the Toeplitz operator with harmonic symbol $\phi(z)=z+\overline{z},z\in\mathbb{D}$. Then $T_\phi$ becomes a constant Jacobi matrix. From Corollary~\ref{new thm} we obtain,  \[ \mathrm{Ber}_q(T_\phi)=\left\{q(w_2+\overline{w_1}):w_1\in\mathbb{D},w_2\in\mathcal{S}_{w_1}\right\}.\]  
      		
      		\noindent If $w_1=0$ then $\langle T_\phi\hat{k}_{w_1},\hat{k}_{w_2}\rangle =qw_2$, where $w_2\in\mathbb{T}_{\sqrt{1-q^2}}$. If $w_1\neq0$ then for $\theta\in[0,2\pi)$,
      		\begin{eqnarray*}
      			\langle T_\phi\hat{k}_{w_1},\hat{k}_{w_2}\rangle 
=q(\lambda^\pm_{w_1}|w_1|+|w_1|)\cos\theta+iq(\lambda^\pm_{w_1}|w_1|-|w_1|)\sin\theta.
      		\end{eqnarray*} Combining the above cases we get, \[\mathrm{Ber}_q(T_\phi)=\Gamma_+\cup \Gamma_-\cup q\mathbb{T}_{\sqrt{1-q^2}},\]
where
\[
\Gamma_+=\left\{ q\Big(|w_1|\cos\theta(\lambda^+_{w_1}+1)
+ i|w_1|\sin\theta(\lambda^+_{w_1}-1)\Big) : 0<|w_1|<1,\; 0\leq\theta<2\pi \right\},
\] and
\[
\Gamma_-=\left\{ q\Big(|w_1|\cos\theta(\lambda^-_{w_1}+1)
+ i|w_1|\sin\theta(\lambda^-_{w_1}-1)\Big) : 0<|w_1|<1,\; 0\leq\theta<2\pi \right\}.
\]
Thus, Ber$_q(T_\phi)$ corresponding to the symbol 
$\phi(z)=z+\overline{z}$ for $z\in\mathbb{D}$, can be described as the union of two families of ellipses together with the circle centered at the origin with radius $q\sqrt{1-q^2}$. For $q=0.4$, we illustrate the sets $\Gamma_+$, $\Gamma_-$, and 
$q\mathbb{T}_{\sqrt{1-q^2}}$, as well as their union, in the figures below. From these plots, one can observe that $\mathrm{Ber}_q(T_\phi)$ appears to be non-convex.
\end{example}
      		\begin{figure}[H]
      			\centering
      			\includegraphics[width=0.32\columnwidth]{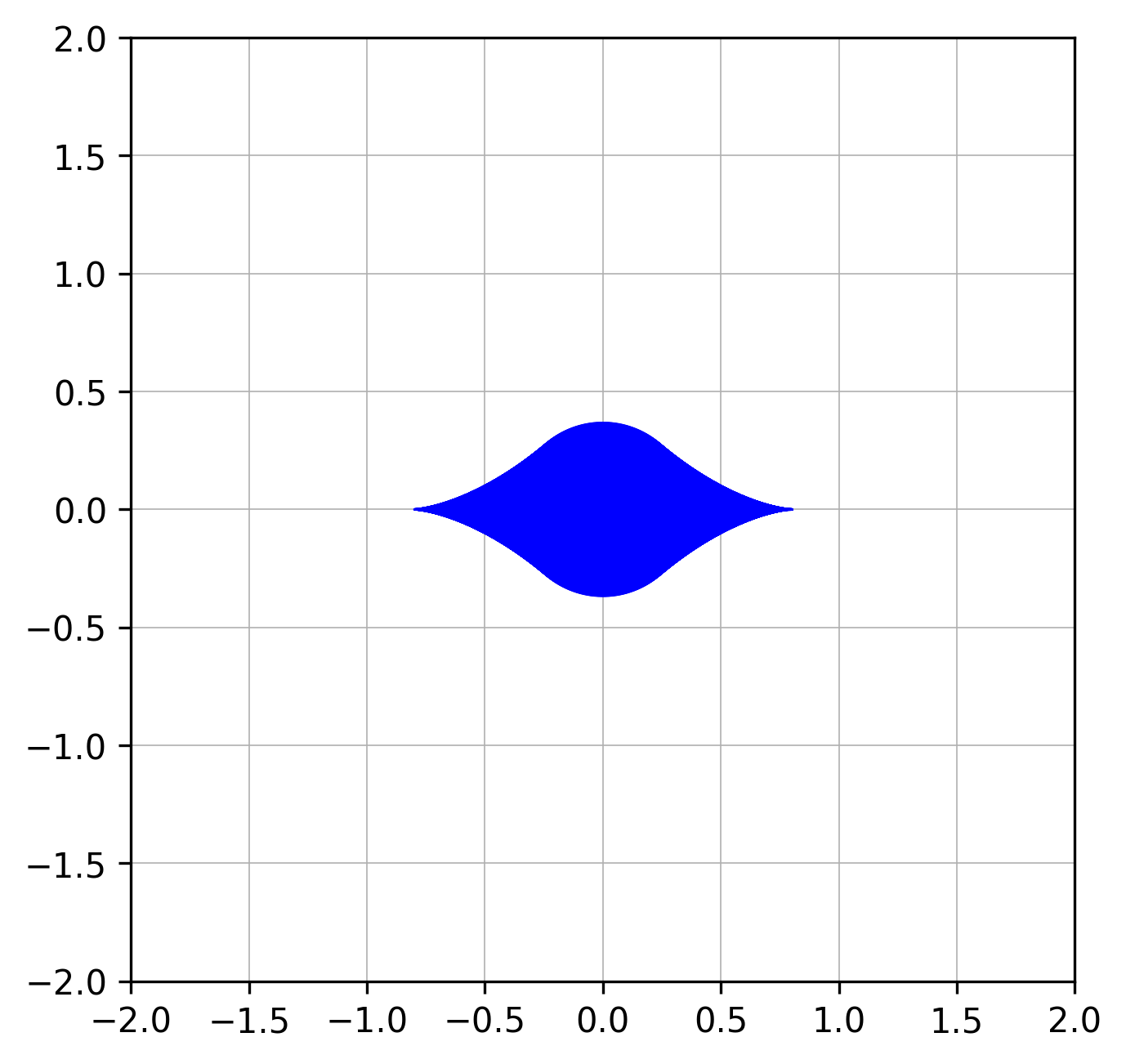}
      			\includegraphics[width=0.32\columnwidth]{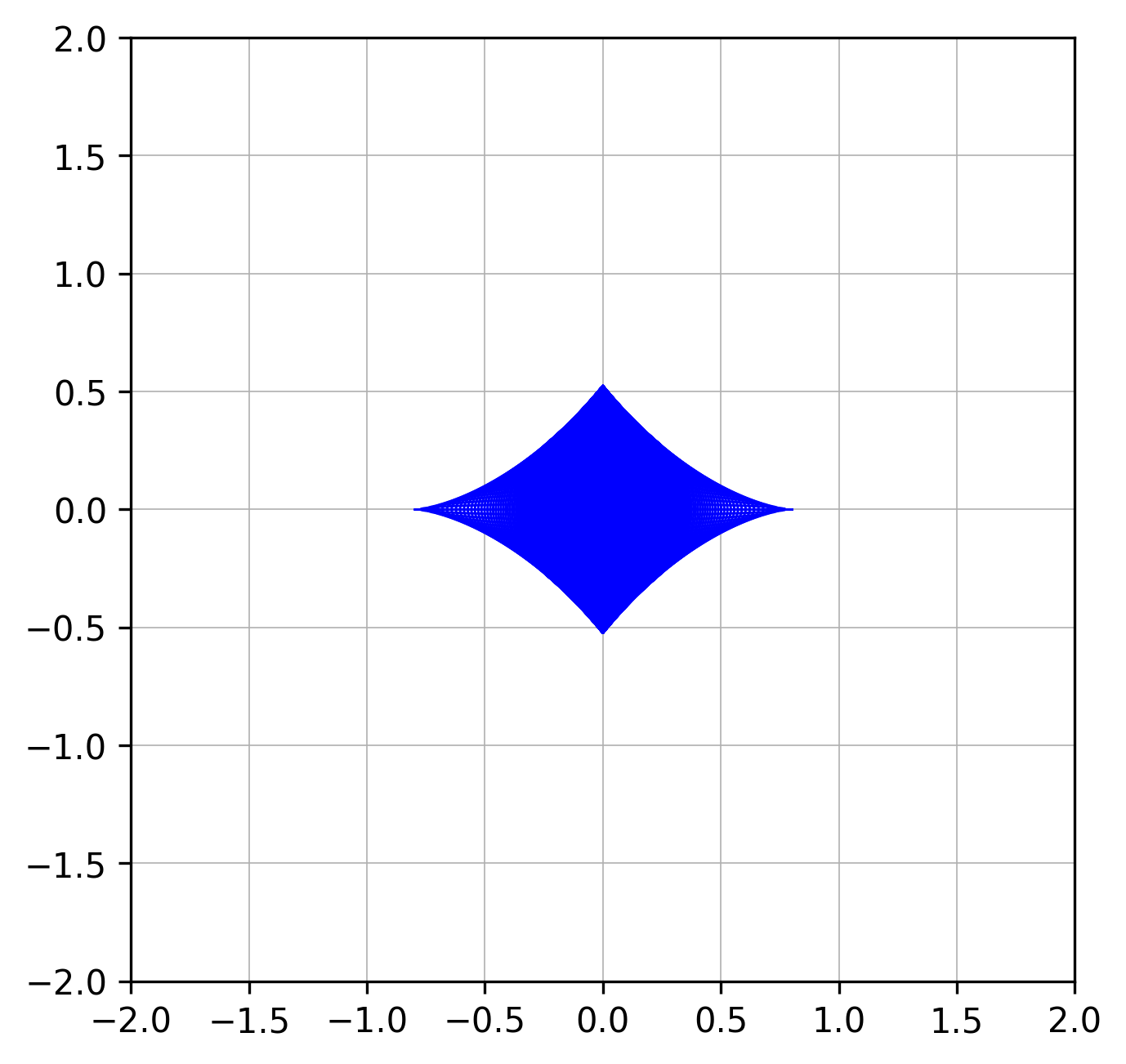}
      			\includegraphics[width=0.32\columnwidth]{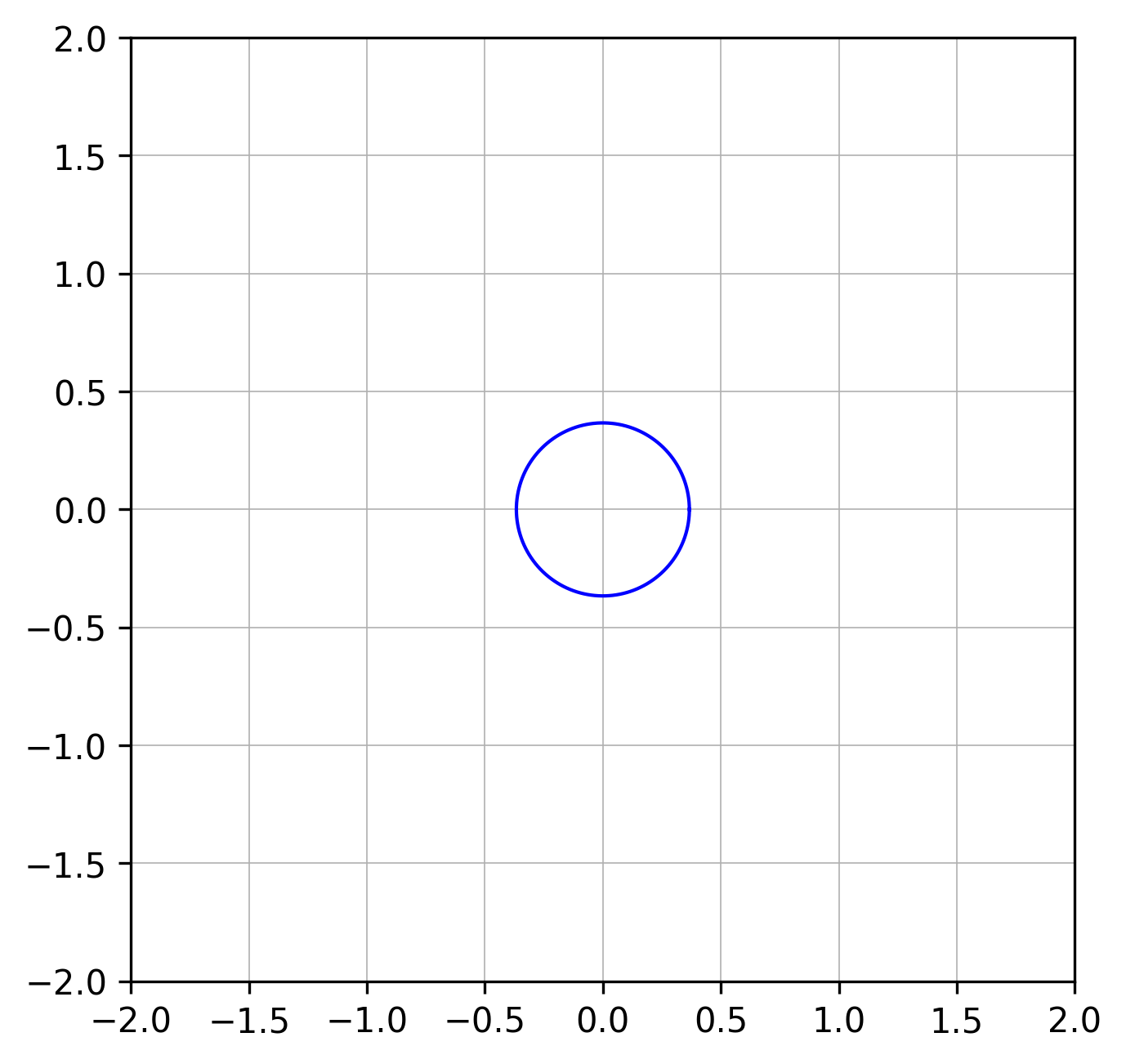}
      			\caption{For $q=0.4$ set $\Gamma_+$ (left), $\Gamma_-$ (middle) and $q\mathbb{T}_{\sqrt{1-q^2}}$ (right).}
                \label{FIG:2}
                \includegraphics[width=0.53\columnwidth]{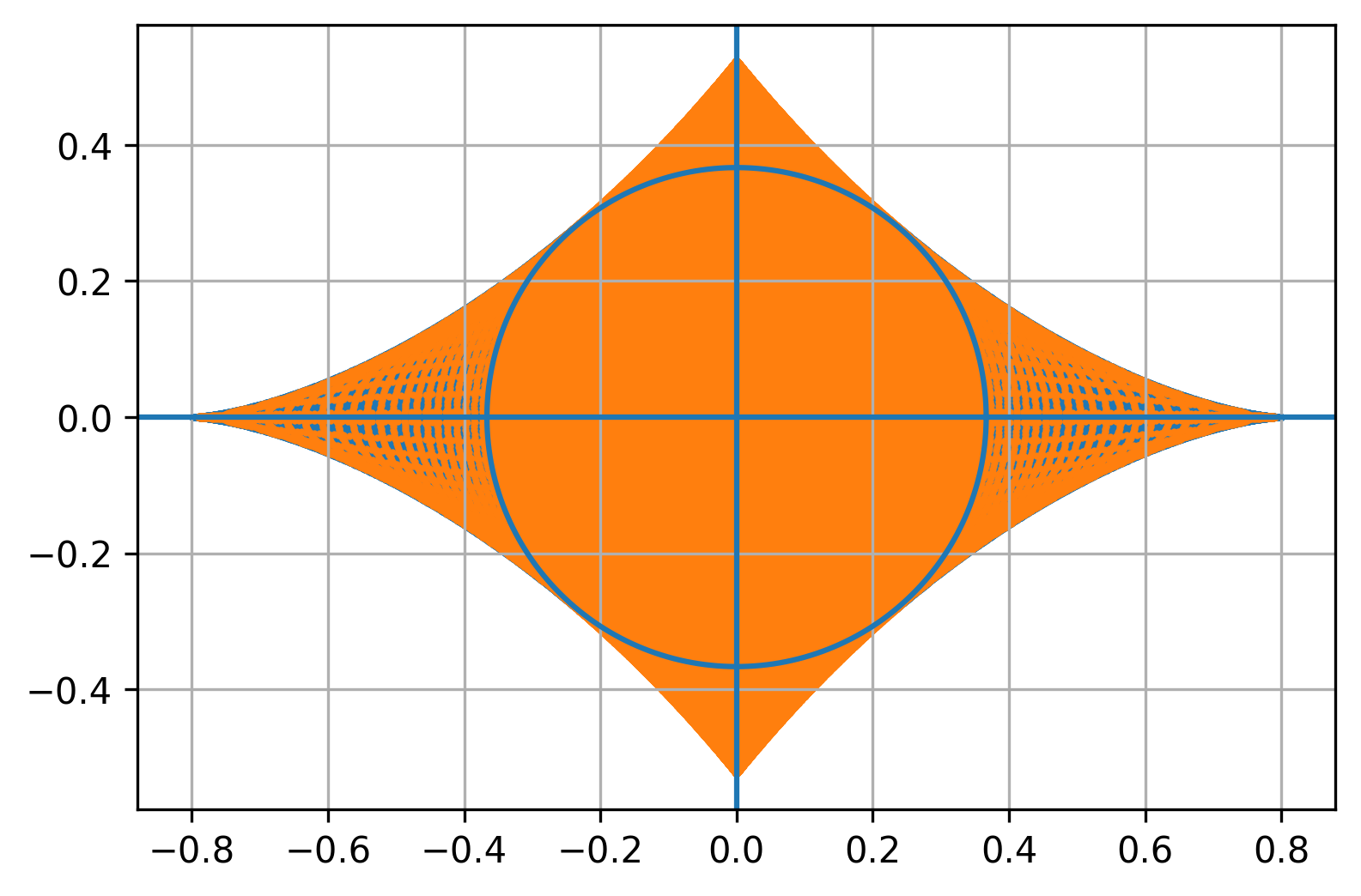}
      			\caption{Ber$_q(T_\phi)=\Gamma_+\cup \Gamma_-\cup q\mathbb{T}_{\sqrt{1-q^2}}$ for $q=0.4$ (apparently not convex).}
                \label{FIG:3}
                \end{figure}
                In particular, if $q=1$ then from the above example we have the following result.
     \begin{corollary}
      	  Ber$(T_\phi)=(-2,2)$ with $\phi(z)=z+\overline{z}$. 
      	\end{corollary}

\begin{example}\label{shift2}
Let us consider the multiplication operator $M_\phi$ acting on $H^2(\mathbb{D})$, where $\phi(z)=p(z)=d_0+d_1z+\cdots+d_kz^k$, $d_i\in\mathbb{C}$ for $i=0,1,\cdots,k$ then from equation~\eqref{multeq} we have, \[\mathrm{Ber}_q(M_{p(z)})=qp(\mathbb{D}).\]
Evidently the convexity of Ber$_q(M_{p(z)})$ depends on the image of $\mathbb{D}$ under the polynomial $p(z)$. We have plotted the $q$-Berezin range of $M_{p(z)}$, where $p(z)=(1+i)(1+z+z^2+\cdots+z^k)$ for $k=1,2,3$ and $q=0.8$.
\end{example}
        
        \begin{figure}[H]
      	\centering
      	\includegraphics[width=0.9\columnwidth]{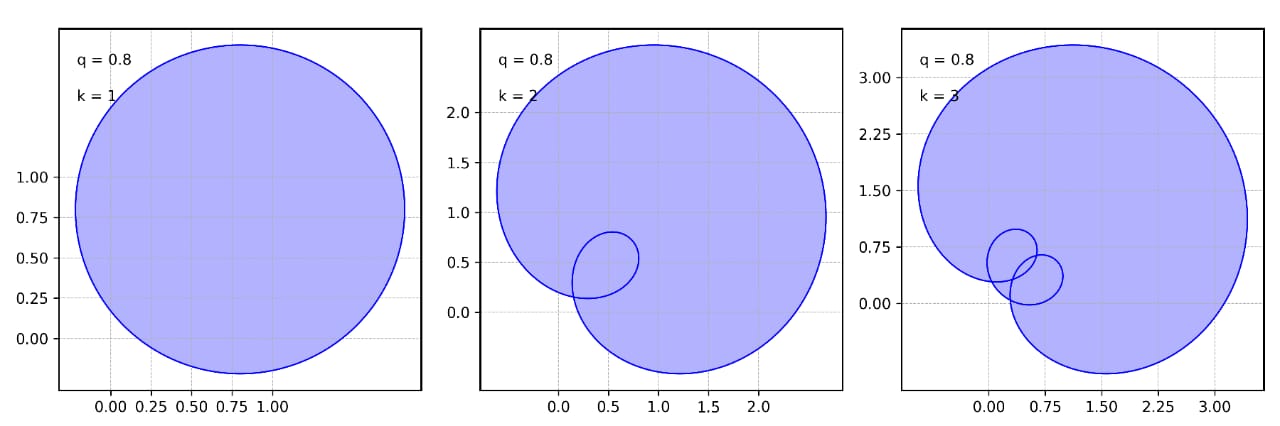}
      	\caption{Ber$_q(M_{p(z)})$ for $q=0.8$ and $p(z)=(1+i)(1+z)$ (left, apparently convex), $p(z)=(1+i)(1+z+z^2)$ (middle, apparently not convex), $p(z)=(1+i)(1+z+z^2+z^3)$ (right, apparently not convex).}
        \label{FIG:4}
      \end{figure}


        \subsection{Weighted shift operators}
        For a bounded weight sequence $\alpha=\{\alpha_n\}_{n=0}^\infty$ the unilateral weighted shift operator $T_\alpha:H^2(\mathbb{D})\to H^2(\mathbb{D})$ is defined by
      	\[T_\alpha(f(z))=\sum_{n=0}^\infty \alpha_na_nz^{n+1}=\sum_{n=0}^\infty\alpha_n\langle f,z^n\rangle z^{n+1},\] where $f(z)=\sum\limits_{n=0}^\infty a_nz^n\in H^2(\mathbb{D})$. With respect to the basis $\{e_n\}$, $e_n(z)=z^n$, $T_\alpha$ has the matrix representation 
     \[
     \begin{pmatrix}
     	0 & 0 & 0 & \cdots  \\
     	\alpha_0 & 0 & 0 & \cdots \\
     	0   & \alpha_1 & 0 & \cdots \\
     	\vdots & \vdots & \vdots & \ddots  \\
     \end{pmatrix}.
     \]
    In the following theorem, we characterize the convexity of Ber$_q(T_\alpha)$ for weights $\alpha_n$.
     \begin{theorem}\label{new ws}
        Let $T_\alpha$ be a unilateral weighted shift operator acting on $H^2(\mathbb{D})$, where $\alpha=\{\alpha_n\}_{n=0}^\infty$ be a bounded real sequence and $0<q\leq1$. Then Ber$_q(T_\alpha)$ is a disc centered at the origin, hence convex. 
     \end{theorem}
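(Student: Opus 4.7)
The plan is to compute $\langle T_\alpha \hat{k}_{w_1}, \hat{k}_{w_2}\rangle$ in closed form, exploit rotational symmetry, and then reduce the convexity question to showing that the set of attainable moduli forms a single interval $[0,R]$. Since $T_\alpha z^n = \beta^n z^{n+1}$, applying $T_\alpha$ term-by-term to $k_{w_1}(z)=\sum_{n=0}^\infty \overline{w_1}^{\,n} z^n$ yields the geometric closed form $(T_\alpha k_{w_1})(z) = z/(1-\beta\overline{w_1}z)$; by the reproducing property, $\langle T_\alpha k_{w_1}, k_{w_2}\rangle = w_2/(1-\beta\overline{w_1}w_2)$. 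Combining with equation~\eqref{neweq} one obtains
\[
\langle T_\alpha \hat{k}_{w_1}, \hat{k}_{w_2}\rangle \;=\; \frac{q(1-t)\,w_2}{1-\beta\,t}, \qquad t:=\overline{w_1}w_2,
\]
where by Lemma~\ref{range} the scalar $t$ is real and lies in $[(q-1)/(q+1),1)$.

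Writing $w_2=re^{i\theta}$ with $r=|w_2|\in[0,1)$ and $\theta\in[0,2\pi)$, the matrix element becomes $\tfrac{q(1-t)r}{1-\beta t}\,e^{i\theta}$. Replacing $(w_1,w_2)$ by $(e^{i\phi}w_1,e^{i\phi}w_2)$ preserves both the constraint $\langle \hat{k}_{w_1}, \hat{k}_{w_2}\rangle=q$ and the value of $t$, while multiplying the matrix element by $e^{i\phi}$; hence $\mathrm{Ber}_q(T_\alpha)$ is rotationally symmetric about $0$, so it suffices to describe the set $\mathcal{R}$ of attained moduli and then recover $\mathrm{Ber}_q(T_\alpha)$ as the union of the corresponding centered circles.

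To analyze $\mathcal{R}$, I would use the two branches from Theorem~\ref{imp}: for each $r\in(0,1)$ the admissible values of $t$ are $t_+(r):=\psi(r)$ and $t_-(r):=\phi(r)$, with $\psi,\phi$ the continuous functions from the proof of Lemma~\ref{range}. Set $g_\pm(r):=\tfrac{q(1-t_\pm(r))\,r}{|1-\beta\,t_\pm(r)|}$. Each $g_\pm$ is continuous and strictly positive on $(0,1)$, with $g_\pm(r)\to 0$ as $r\to 0^+$ (since $t_\pm(r)\to 0$ and $r\to 0$) and as $r\to 1^-$ (since $t_\pm(r)\to 1$ while $|1-\beta t_\pm(r)|\to|1-\beta|>0$). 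Consequently each image $g_\pm((0,1))$ is an interval of the form $(0,M_\pm]$, and adjoining the modulus $0$ coming from $w_2=0$ yields $\mathcal{R}=[0,R]$ where $R=\max(M_+,M_-)$. Combined with rotational symmetry, $\mathrm{Ber}_q(T_\alpha)$ equals the closed disc $\{z\in\mathbb{C}:|z|\le R\}$ centered at the origin, which is convex.

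The main obstacle is justifying the endpoint vanishing of $g_\pm$: while the $r\to 0^+$ limit is immediate from the explicit formulas for $\psi,\phi$, the $r\to 1^-$ limit requires that $1-t_\pm(r)\to 0$ while $|1-\beta t_\pm(r)|$ stays bounded away from $0$, which crucially relies on $|\beta|<1$. A secondary point is to confirm that for every $r\in(0,1)$ \emph{both} branches yield genuine admissible configurations $(w_1,w_2)$, a fact that follows from the surjectivity portion of Theorem~\ref{imp}(ii) together with Lemma~\ref{range}.
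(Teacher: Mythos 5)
Your strategy --- closed-form evaluation $\langle T_\alpha\hat k_{w_1},\hat k_{w_2}\rangle=q(1-t)w_2/(1-\beta t)$ with $t=\overline{w_1}w_2$, rotation invariance via $(w_1,w_2)\mapsto(e^{i\phi}w_1,e^{i\phi}w_2)$, and reduction to the set of attained moduli --- is essentially the paper's argument, and your computation of the kernel element is correct. However, your endpoint analysis contains a concrete slip. The modulus of the matrix element is $q(1-t)\,|w_2|/|1-\beta t|$, and on the branch parametrized by $r=|w_1|$ one has $|w_2|=|\lambda^{\pm}_{w_1}|\,|w_1|=|f_\pm(r)|$, not $r$; so your $g_\pm(r)=q(1-t_\pm(r))\,r/|1-\beta t_\pm(r)|$ is not the quantity you need. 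In particular, as $|w_1|\to0^+$ one has $t_\pm\to0$ but $|w_2|\to\sqrt{1-q^2}$, so the modulus tends to $q\sqrt{1-q^2}>0$ (for $q<1$), not to $0$ --- this is exactly why the paper adjoins the circle $q\mathbb{T}_{\sqrt{1-q^2}}$ arising from $w_1=0$. Your claim that each image $g_\pm((0,1))$ has the form $(0,M_\pm]$ therefore rests on a false limit at one endpoint (and the assertion that $g_\pm$ is strictly positive also fails: the minus branch vanishes at $|w_1|=\sqrt{1-q^2}$, where $\lambda^-_{w_1}=0$ and $w_2=0$). The conclusion nevertheless survives with the corrected formula: the true modulus function is continuous on $(0,1)$, tends to $0$ as $|w_1|\to1^-$ (here your point that $|1-\beta t|\to|1-\beta|>0$ because $|\beta|<1$ is the right one), attains $0$ at the interior point $|w_1|=\sqrt{1-q^2}$ on the minus branch, and its supremum, if approached only as $|w_1|\to0^+$, equals $q\sqrt{1-q^2}$, which is attained through the $w_1=0$ configurations; hence the set of radii is still an interval $[0,R]$ and $\mathrm{Ber}_q(T_\alpha)$ is the disc of radius $R$ centered at the origin. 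Please replace $r$ by $|f_\pm(r)|$ in the numerator of $g_\pm$ and repair the $r\to0^+$ limit accordingly.
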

     \begin{proof}
        Let $w_1\in\mathbb{D}$ and $w_2 \in \mathcal{S}_{w_1}$, that is, $\langle\hat{k}_{w_1},\hat{k}_{w_2}\rangle=q$. Then
      	\begin{eqnarray}\label{weight equ}
      		\langle T_\alpha\hat{k}_{w_1},\hat{k}_{w_2}\rangle 
      		&=&\sqrt{(1-|w_1|^2)(1-|w_2|^2)}\left\langle \sum_{n=0}^\infty \alpha_n\langle k_{w_1},z^n\rangle z^{n+1},k_{w_2}\right\rangle\nonumber\\
      		&=& q(1-\overline{w_1}w_2)w_2\sum_{n=0}^\infty \alpha_n(\overline{w_1}w_2)^n \quad\text{(using equation~\eqref{neweq})}.
      \end{eqnarray} 
      		 If $w_1=0$ then $\langle T_\alpha\hat{k}_{w_1},\hat{k}_{w_2}\rangle=q\alpha_0 w_2$, where $w_2\in\mathbb{T}_{\sqrt{1-q^2}}$. Now, we have,
      		\begin{eqnarray*}
      		\mathrm{Ber}_q(T_\alpha)&=&\left\{u_\pm(|w_1|)e^{i\theta}:0<|w_1|<1,0\leq\theta<2\pi\right\}\bigcup q\alpha_0\mathbb{T}_{\sqrt{1-q^2}}\\
      		&=& \mathcal{X}_\pm \bigcup q\alpha_0\mathbb{T}_{\sqrt{1-q^2}},
      	\end{eqnarray*}
      		where $\mathcal{X}_\pm = \left\{u_\pm(|w_1|)e^{i\theta}:0<|w_1|<1,0\leq\theta<2\pi\right\},$ and the continuous functions $u_\pm : (0,1) \to \mathbb{R}$ are defined by
            \[u_\pm(|w_1|)=q(1-\lambda^\pm_{w_1}|w_1|^2)\lambda^\pm_{w_1}|w_1|\sum_{n=0}^\infty\alpha_n(\lambda^\pm_{w_1}|w_1|^2)^n.\] Let $\eta\in \mathcal{X}_\pm$. Then
      		\[
      		\eta=u_\pm(|w_1|)e^{i\theta}
      		\] for some $0<|w_1|<1$, $0\leq\theta<2\pi$. Now, for any $\zeta\in[0,2\pi)$,
      		\[
      		\eta e^{i\zeta}=u_\pm(|w_1|)e^{i(\theta+\zeta)}\in \mathcal{X}_\pm,
      		\] since, $\eta e^{i\zeta}$ is the image of $|w_1|e^{i\left(\theta+\zeta\right)}$ for all $\zeta\in[0,2\pi)$, where $0<|w_1|<1$. As, $0\in\mathrm{Ber}_q(T_\alpha)$ and $\lim\limits_{|w_1|\to 0^+}u_\pm(|w_1|)=\pm q\alpha_0\sqrt{1-q^2}$ it is obvious that Ber$_q(T_\alpha)$ is a disc with center at the origin.
     \end{proof}
     Some particular cases of sequence $\{\alpha_n\}_{n=0}^\infty$ are considered below.
        \begin{corollary} \label{shift coro}
        Let $T_\alpha$ be a unilateral weighted shift operator with weights $\alpha_n$. Then we have the following results.
        \begin{enumerate}
            \item [(i)] If $\alpha_n=\alpha_0$, for all $n\geq0$ and for some $\alpha_0\in\mathbb{R}$ then Ber$_q(T_\alpha)$ is a disc centered at origin.
            \item [(ii)]  If $\alpha_n=\beta^n$ for some fixed $\beta\in(0,1)$ and $n\in\mathbb{N}\cup\{0\}$ then Ber$_q(T_\alpha)$ is a disc centered at origin.
            \end{enumerate}
            \end{corollary}
            \begin{proof}
            (i) If $\alpha_n=\alpha_0$, for all $n\geq0$ then from equation~\eqref{weight equ} it directly follows that 
            \[\mathrm{Ber}_q(T_\alpha)=\bigcup_{w_1\in\mathbb{D}}\{q\alpha_0 w_2:w_2\in\mathcal{S}_{w_1}\}=q\alpha_0\mathbb{D}.\]
       (ii) If $\alpha_n=\beta^n$ then from Theorem~\ref{new ws}, we obtain,
\[
\mathrm{Ber}_q(T_\alpha)
=\left\{u_\pm(|w_1|)e^{i\theta}: 0<|w_1|<1,\; 0\leq \theta<2\pi\right\}
\;\cup\; q\mathbb{T}_{\sqrt{1-q^2}},
\]
where the continuous functions $u_\pm:(0,1)\to\mathbb{R}$ are defined by
\[
u_\pm(|w_1|)=\frac{q\bigl(1-\lambda^\pm_{w_1}|w_1|^2\bigr)}{1-\beta \lambda^\pm_{w_1}|w_1|^2}\,\lambda^\pm_{w_1}|w_1|.
\]
Hence, $\mathrm{Ber}_q(T_\alpha)$ is a disc centered at the origin. 
\end{proof}
\begin{remark}
Some findings on above corollary are stated below.
\begin{enumerate}
    \item [(i)] From Corollary~\ref{shift coro} (i) it is easy to see that the $q$-Berezin number is $q\alpha_0$.
    \item [(ii)] From Corollary~\ref{shift coro} (ii) in order to find ber$_q(T_\alpha)$, we have to compute $\max\limits_{|w_1|\in(0,1)}|u_\pm(|w_1|)|$. Consider \[u_+(|w_1|)=\frac{q\bigl(1-\lambda^+_{w_1}|w_1|^2\bigr)}{1-\beta \lambda^+_{w_1}|w_1|^2}\,\lambda^+_{w_1}|w_1|,
\] where $\lambda^+_{w_1}|w_1|=\dfrac{(1-k^2)|w_1|+(1-|w_1|^2)k}{1-k^2|w_1|^2}=\dfrac{|w_1|+k}{1+k|w_1|},\ k=\sqrt{1-q^2}$. Let $|w_1|=r$. Now,
            \[u_+(r)=\dfrac{\sqrt{1-k^2}\left(1-\tfrac{r(r+k)}{1+kr}\right)}{1-\beta\tfrac{r(r+k)}{1+kr}}\left(\dfrac{r+k}{1+kr}\right)
            =\dfrac{\sqrt{1-k^2}(1-r^2)(r+k)}{(1+kr-\beta r^2-k\beta r)(1+kr)}.\] Taking logarithm on both sides we have, 
            \[F_+(r)=\log u_+(r)=\log\sqrt{1-k^2}+\log(1-r^2)+\log(r+k)-\log (1+kr-\beta r^2-k\beta r)-\log(1+kr).\] To detect the extreme points, it is enough to derivate $F_+(r)$ and equate to zero. 
Now,
\begin{eqnarray*}
F_+'(r) &=&
-2r(r+k)(1+kr)(1+kr-\beta r^2-k\beta r)+(1-r^2)(1+kr)(1+kr-\beta r^2-k\beta r)-\\
&&(k-2\beta r-k\beta)(1-r^2)(r+k)(1+kr)-k(1-r^2)(r+k)(1+kr-\beta r^2-k\beta r).
\end{eqnarray*}
If there exists $r_0\in(0,1)$ such that $F_+'(r_0)=0$ and $F_+''(r_0)<0$, then $u_+(r)$ attains a maximum at $r_0$. Similarly, considering $u_-(r)$, if there exists $r_1\in(0,1)$ such that $F_-'(r_1)=0$ and $F_-''(r_1)<0$, then $u_-(r)$ attains a maximum at $r_1$. Consequently,
\[
\mathrm{ber}_q(T_\alpha)
=\max\left\{u_+(r_0),\,u_-(r_1),\,q\sqrt{1-q^2}\right\}.
\]
\end{enumerate}
In particular, for $q=0.5$ and $\beta=0.5$, we have, \[\mathrm{ber}_q(T_\alpha)=q\sqrt{1-q^2}=0.4330.\]
\end{remark}
      
\subsection{Composition operators}

We investigate the convexity of the \(q\)-Berezin range associated with composition operators on the Hardy space \(H^2(\mathbb{D})\). Let \(\phi:\mathbb{D}\to\mathbb{D}\) be an analytic self-map, and consider the composition operator \(C_\phi\) defined by \(C_\phi f = f \circ \phi\). In this section, we restrict our attention to the case where \(\phi\) is an elliptic-type symbol as well as a Blaschke factor.

First, we focus on the case $\phi(z) = \xi z$ with $\xi \in \overline{\mathbb{D}}$. Let \(w_1 \in \mathbb{D}\) and \(w_2 \in \mathcal{S}_{w_1}\), that is, \(\langle \hat{k}_{w_1}, \hat{k}_{w_2} \rangle = q\). Then,
\[
\langle C_{\phi}\hat{k}_{w_1}, \hat{k}_{w_2} \rangle
= \sqrt{(1-|w_1|^2)(1-|w_2|^2)} \, C_\phi k_{w_1}(w_2)
= \frac{q(1-\overline{w_1}w_2)}{1-\xi(\overline{w_1}w_2)},
\]
where the final equality follows from equation~\eqref{neweq}. In the particular case \(w_1 = 0\), this reduces to \(\langle C_{\phi}\hat{k}_{w_1}, \hat{k}_{w_2} \rangle = q\).

Consequently, by Lemma~\ref{range}, the \(q\)-Berezin range of \(C_\phi\) is given by
\begin{equation}\label{newcomp}
\mathrm{Ber}_q(C_\phi)
=
\left\{
\frac{q(1-t)}{1-\xi t}
:\;
t \in \left[\frac{q-1}{q+1},\,1\right)
\right\}.
\end{equation}

In this setting we have the following lemma.                
                \begin{lemma}\label{comp lemma}
                Let $\phi(z) = \xi z$ with $\xi \in \overline{\mathbb{D}}$. Then
                     \begin{enumerate}
        \item [(i)] Ber$_q(C_\phi)$ is a singleton if and only if $\xi=1$,
        \item [(ii)] $\Im\left\{\mathrm{Ber}_q(C_\phi)\right\}=\{0\}$ if and only if $\Im\{\xi\}=0$.
    \end{enumerate}
                \end{lemma}
                \begin{proof}
        (i) As, the sufficient part is clear, we only show the necessary part. Let Ber$_q(C_\phi)$ is a singleton. Since, $\dfrac{q(1-t)}{1-\xi t}=q$ at $t=0$ then we have, $\mathrm{Ber}_q(C_\phi)=\{q\},$ and consequently
        \[\dfrac{q(1-t)}{1-\xi t}=q \ \mbox{holds for all } t\in\left[\dfrac{q-1}{q+1},1\right).\]
        This implies $\xi t=t$ for all $t\in\left[\dfrac{q-1}{q+1},1\right)\setminus\{0\}$. Hence, $\xi=1$.

        (ii) If $\Im\{\xi\}=0$ then it is easy to see that $\Im\left\{\mathrm{Ber}_q(C_\phi)\right\}=\{0\}$. Conversely, let $\Im\left\{\mathrm{Ber}_q(C_\phi)\right\}=\{0\}$. Then $\dfrac{q(1-t)}{1-\xi t}\in\mathbb{R}$ for all $t\in\left[\dfrac{q-1}{q+1},1\right)$. In particular, this holds for all $t\neq0$ as, the case $t=0$ is trivial. Let $\dfrac{q(1-t)}{1-\xi t}=h_t$, for some $h_t\in\mathbb{R}\setminus\{0\}$. Then for $t\neq0$, $\xi=\dfrac{1}{t}-\dfrac{q(1-t)}{th_t}\in\mathbb{R}$. Hence, $\Im\{\xi\}=0$. 
        \end{proof}

The mapping
\[
w = \frac{q(1-t)}{1-\xi t}, \qquad t \in \left[\frac{q-1}{q+1},\,1\right),
\]
is a Möbius transformation of the real variable $t$. It is well known that Möbius transformations map line-segments in the extended real line to either line-segments or circular arcs in the complex plane. Hence, Ber$_q(C_\phi)$ is either a circular arc or a line-segment. Consequently, the problem of determining the convexity of $\mathrm{Ber}_q(C_\phi)$ reduces to identifying those values of $\xi$ for which the image of the interval $\left[\dfrac{q-1}{q+1},\,1\right)$ under $w$ is a line-segment in the complex plane. In this regard, we have the following result.
		\begin{theorem}\label{comp}
Let $\phi(z)=\xi z$, where $\xi \in \overline{\mathbb{D}}$, and let $0<q\leq 1$. Then the following statements are equivalent:
\begin{enumerate}
    \item[(i)] The set $\mathrm{Ber}_q(C_\phi)$ is convex.
    \item[(ii)] The set $\mathrm{Ber}_q(C_\phi)$ is a (possibly degenerate) line-segment.
    \item[(iii)] $\xi \in [-1,1]$.
\end{enumerate}
\end{theorem}
		\begin{proof}
        It is enough to prove that $\mathrm{Ber}_q(C_\phi)$ is a line-segment if and only if $-1\leq\xi\leq1.$ Let $w_1\in\mathbb{D}$ and $w_2\in \mathcal{S}_{w_1}$, that is, $\langle \hat{k}_{w_1},\hat{k}_{w_2}\rangle=q$. Then from equation~\eqref{newcomp} we obtain,
		\[\mathrm{Ber}_q(C_\phi)=\left\{\dfrac{q(1-t)}{1-\xi t}:t\in\left[\dfrac{q-1}{q+1},1\right)\right\}.\]
			 Firstly suppose that $\xi=1$ then  
			Ber$_q(C_{\phi})=\{q\}$, which is convex. Similarly, for $-1\leq\xi<1$, 
 $\dfrac{q(1-t)}{1-\xi t}$ is a decreasing function in $t$. Now $\lim\limits_{t\to1^-}\dfrac{q(1-t)}{1-\xi t}=0$ and \[\max\left\{\dfrac{q(1-t)}{1-\xi t}:t\in\left[\dfrac{q-1}{q+1},1\right)\right\}=\dfrac{2q}{q+1-\xi q+\xi}.\] 
			 Hence, Ber$_q(C_{\phi})=\left(0,\dfrac{2q}{q+1-\xi q+\xi}\right]$, which is also a line-segment.
		\noindent	Conversely, suppose that 
			\[	\mathrm{Ber}_q(C_{\phi})
				=\left\{\dfrac{q(1-t)}{1-\xi t}:t\in\left[\dfrac{q-1}{q+1},1\right)\right\}\]
		  is a line-segment or a singleton set. From Lemma~\ref{comp lemma}~(i), Ber$_q(C_{\phi})$ is a point $\{q\}$ if and only if $\xi=1$. Let $w=\dfrac{q(1-t)}{1-\xi t}$. Clearly it is a M\"obius transformation of the real variable $t$. This implies $t=\dfrac{q-w}{q-\xi w}$. Now, $t=\overline{t}$ gives
        \[q(\overline{w}-w)+q(\xi w-\overline{\xi}\overline{w})+|w|^2(\overline{\xi}-\xi)=0.\]
        Since, Ber$_q(C_\phi)$ is a line-segment we must have,
        $\overline{\xi}-\xi=0$, which follows that $\Im\{\xi\}=0$. As, $\xi\in\overline{\mathbb{D}}$, we have $-1\leq\xi\leq1$.
		\end{proof}
\begin{remark}
Some observations on Theorem \ref{comp} are mentioned below.
\begin{enumerate}
\item[(i)] If $\Im(\xi) \neq 0$ then Ber$_q(C_{\phi})$ lies on the circle
\[q(\overline{z}-z)+q(\xi z-\overline{\xi}\overline{z})+|z|^2(\overline{\xi}-\xi)=0\]
in the complex plane.
    \item[(ii)] Let $\xi \in \mathbb{T}$ and $\phi(z)=\xi z$. For $0<q\leq 1$, the $q$-Berezin range of $C_\phi$ on $H^2(\mathbb{D})$ is convex if and only if either $\xi=1$ or $\xi=-1$.

    \item[(iii)] For $q=1$, the Berezin range $\mathrm{Ber}(C_\phi)$ is convex if and only if $-1 \leq \xi \leq 1$, where $\phi(z)=\xi z$ and $\xi \in \overline{\mathbb{D}}$; see \cite[Theorem~4.1]{augustine2023composition}.
\end{enumerate}
      
      \end{remark}

	To this end, we will see that Ber$_q(C_\phi)$ is not always convex. We sketch Ber$_q(C_\phi)$ on $H^2(\mathbb{D})$ for $\phi(z)=\dfrac{i\pi}{4}z$, choosing some fixed values of $q$.
			\begin{figure}[H]
			\centering
			\includegraphics[width=0.5\columnwidth]{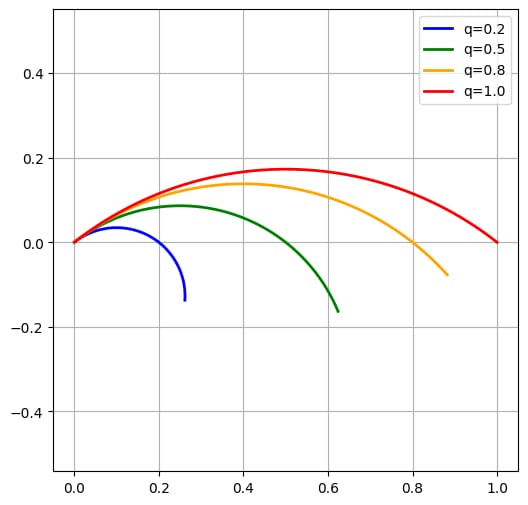}
			\caption{Ber$_q(C_{\phi})$ on $H^2(\mathbb{D})$ for $\xi=\frac{i\pi}{4}$ and $q=0.2,0.5,0.8,1.0$ (apparently not convex).}
            \label{FIG:5}
		\end{figure}

		For $\alpha\in\mathbb{D}$, consider the automorphism of the unit disc $\phi_\alpha(z)=\dfrac{z-\alpha}{1-\overline{\alpha}z}$ and the composition operator $C_{\phi_\alpha}$ acting on $H^2(\mathbb{D})$ is defined by $C_{\phi_\alpha}f=f\circ\phi_\alpha$. In the following result we give some geometrical properties of the $q$-Berezin range of $C_{\phi_\alpha}$ on $H^2(\mathbb{D})$.
		\begin{proposition}\label{conjugation}
			The $q$-Berezin range of $C_{\phi_\alpha}$ on $H^2(\mathbb{D})$ is symmetric about the real line where $0<q\leq1$.
		\end{proposition}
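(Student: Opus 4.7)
The plan is to reduce the claim to the case of a real parameter via a rotational unitary equivalence, and then verify the symmetry by a direct substitution. Writing $\alpha=|\alpha|e^{i\gamma}$, I would consider the rotation unitary $(R_\gamma f)(z):=f(e^{i\gamma}z)$ on $H^2(\mathbb{D})$. A short Möbius manipulation gives
\[
e^{-i\gamma}\phi_\alpha(e^{i\gamma}z)=\frac{z-e^{-i\gamma}\alpha}{1-\overline{\alpha}e^{i\gamma}z}=\frac{z-|\alpha|}{1-|\alpha|z}=\phi_{|\alpha|}(z),
\]
which translates to the operator identity $R_\gamma C_{\phi_\alpha}R_\gamma^{*}=C_{\phi_{|\alpha|}}$. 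By unitary invariance of the $q$-Berezin range (Proposition~\ref{basic}(ii)), $\mathrm{Ber}_q(C_{\phi_\alpha})=\mathrm{Ber}_q(C_{\phi_{|\alpha|}})$, so it is enough to prove the symmetry for $C_{\phi_r}$ with $r=|\alpha|\in[0,1)$.

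For real $r$ I would carry out a computation parallel to the one preceding Theorem~\ref{comp}: starting from $C_{\phi_r}k_{w_1}(z)=\frac{1-rz}{(1+r\overline{w_1})-(r+\overline{w_1})z}$ and using \eqref{neweq}, one obtains
\[
\langle C_{\phi_r}\hat{k}_{w_1},\hat{k}_{w_2}\rangle=q(1-\overline{w_1}w_2)\cdot\frac{1-rw_2}{(1+r\overline{w_1})-(r+\overline{w_1})w_2}.
\]
Because $r$ is real, the complex conjugate of the right-hand side is obtained simply by replacing $w_1,w_2$ with $\overline{w_1},\overline{w_2}$; that is,
\[
\overline{\langle C_{\phi_r}\hat{k}_{w_1},\hat{k}_{w_2}\rangle}=\langle C_{\phi_r}\hat{k}_{\overline{w_1}},\hat{k}_{\overline{w_2}}\rangle.
\]

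To close the argument I must show that the constraint $\langle\hat{k}_{w_1},\hat{k}_{w_2}\rangle=q$ transfers to $\langle\hat{k}_{\overline{w_1}},\hat{k}_{\overline{w_2}}\rangle=q$. This follows from Theorem~\ref{imp}: either $w_1=0$, in which case $|w_2|^2=1-q^2$ is unchanged by conjugation and $\overline{w_2}\in\mathcal{S}_0$, or $w_1\neq 0$ and $w_2=\lambda^{\pm}_{w_1}w_1$ with $\lambda^{\pm}_{w_1}\in\mathbb{R}$ depending only on $|w_1|=|\overline{w_1}|$, so that $\overline{w_2}=\lambda^{\pm}_{\overline{w_1}}\overline{w_1}\in\mathcal{S}_{\overline{w_1}}$. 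Hence $\overline{\beta}\in\mathrm{Ber}_q(C_{\phi_r})$ whenever $\beta\in\mathrm{Ber}_q(C_{\phi_r})$, which is the required reflection symmetry. The main potential obstacle will be executing the rotational intertwining cleanly, as the phase of $\alpha$ must be absorbed correctly by the Möbius algebra; once that identity is in hand, the reduction to real $r$ makes the symmetry transparent, because complex conjugation then commutes with every remaining coefficient in the Berezin expression.
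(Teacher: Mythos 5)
Your proof is correct, and it reaches the conclusion by a genuinely different route from the paper. The paper works with $\alpha=\rho e^{i\psi}$ directly and exhibits, for each admissible pair $(w_1,w_2)$, the explicit witness pair $(\overline{w_1}e^{2i\psi},\overline{w_2}e^{2i\psi})$, the key step being the M\"obius identity $e^{2i\psi}\,\overline{\phi_\alpha(\lambda^{\pm}_{w_1}r_1e^{i(2\psi-\theta_1)})}=\phi_\alpha(\lambda^{\pm}_{w_1}r_1e^{i\theta_1})$. You instead factor that same reflection as a rotation followed by plain conjugation: the intertwining $R_\gamma C_{\phi_\alpha}R_\gamma^{*}=C_{\phi_{|\alpha|}}$ reduces everything to a real parameter $r$, after which $\overline{\langle C_{\phi_r}\hat{k}_{w_1},\hat{k}_{w_2}\rangle}=\langle C_{\phi_r}\hat{k}_{\overline{w_1}},\hat{k}_{\overline{w_2}}\rangle$ is immediate because every coefficient in the Berezin expression is real, and the constraint $\langle\hat{k}_{w_1},\hat{k}_{w_2}\rangle=q$ is preserved under conjugation exactly as you argue (since $\lambda^{\pm}_{w_1}$ is real and depends only on $|w_1|$). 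Your version is more modular and makes the symmetry transparent; its only cost is the appeal to Proposition~\ref{basic}(ii), whose proof in the paper tacitly assumes that $U^{*}$ carries normalized reproducing kernels to normalized reproducing kernels, which fails for a general unitary. For your rotation it does hold and is a one-line check: $R_{\gamma}^{*}k_w=R_{-\gamma}k_w=k_{e^{i\gamma}w}$ and $|e^{i\gamma}w|=|w|$, so $R_\gamma^{*}\hat{k}_w=\hat{k}_{e^{i\gamma}w}$ and the $q$-constraint is preserved. State that line explicitly rather than citing the proposition as a black box; with that addition your argument is complete and, to my mind, cleaner than the paper's direct computation.
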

		\begin{proof}
			Let $w_1=r_1e^{i\theta_1},w_2\in \mathbb{D}$ be such that $\langle\hat{k}_{w_1},\hat{k}_{w_2}\rangle=q$, and $\alpha=\rho e^{i\psi}$. Then 
			\begin{eqnarray*}
				\langle C_{\phi_{\alpha}}\hat{k}_{w_1},\hat{k}_{w_2}\rangle 
					=\sqrt{(1-|w_1|^2)(1-|w_2|^2)}(k_{w_1}\circ\phi_\alpha)(w_2)
					=\dfrac{q(1-\overline{w_1}w_2)}{1-\overline{w_1}\phi_\alpha(w_2)} \quad \text{(using equation~\eqref{neweq})}.
			\end{eqnarray*}  Our claim is that 
			\begin{eqnarray*}
				\langle C_{\phi_\alpha}\hat{k}_{w_1},\hat{k}_{w_2}\rangle=\overline{\langle C_{\phi_\alpha}\hat{k}_{\overline{w_1}e^{2i\psi}},\hat{k}_{\overline{w_2}e^{2i\psi}}\rangle}.
			\end{eqnarray*} 
			
			\noindent Case 1: For $w_1=0$, $\langle C_{\phi_\alpha}\hat{k}_{w_1},\hat{k}_{w_2}\rangle=\overline{\langle C_{\phi_\alpha}\hat{k}_{\overline{w_1}e^{2i\psi}},\hat{k}_{\overline{w_2}e^{2i\psi}}\rangle}=q$.
			
			\noindent Case 2: For $w_1\neq0$, we have, $w_2=\lambda^\pm_{w_1}w_1$. Now,
			\begin{eqnarray*}
            &&\langle C_{\phi_\alpha}\hat{k}_{w_1},\hat{k}_{w_2}\rangle=\overline{\langle C_{\phi_\alpha}\hat{k}_{\overline{w_1}e^{2i\psi}},\hat{k}_{\overline{w_2}e^{2i\psi}}\rangle}\\
					&\iff& \dfrac{q(1-\lambda^\pm_{w_1}r_1^2)}{1-r_1e^{-i\theta_1}\phi_\alpha(\lambda^\pm_{w_1}r_1e^{i\theta_1})}=\overline{\dfrac{q(1-r_1e^{-i(2\psi-\theta_1)}\lambda^\pm_{w_1}r_1e^{i(2\psi-\theta_1)})}{1-r_1e^{-i(2\psi-\theta_1)}\phi_\alpha(\lambda^\pm_{w_1}r_1e^{i(2\psi-\theta_1)})}}\\
					&\iff& r_1e^{-i\theta_1}\phi_\alpha(\lambda^\pm_{w_1}r_1e^{i\theta_1})=r_1e^{i(2\psi-\theta_1)}\overline{\phi_\alpha(\lambda^\pm_{w_1}r_1e^{i(2\psi-\theta_1)})}\\
                    &\iff& \phi_\alpha(\lambda^\pm_{w_1}r_1e^{i\theta_1})=e^{i2\psi}\overline{\phi_\alpha(\lambda^\pm_{w_1}r_1e^{i(2\psi-\theta_1)})}.
			\end{eqnarray*}
            Finally,
			\begin{eqnarray*}
					e^{i2\psi}\overline{\phi_\alpha(\lambda^\pm_{w_1}r_1e^{i(2\psi-\theta_1)})}=e^{i2\psi}\dfrac{\lambda^\pm_{w_1}r_1e^{i(\theta_1-2\psi)}-\rho e^{-i\psi}}{1-\rho e^{i\psi}\lambda^\pm_{w_1}r_1e^{i(\theta_1-2\psi)}}
					 = \dfrac{\lambda^\pm_{w_1}r_1e^{i\theta_1}-\rho e^{i\psi}}{1-\rho e^{-i\psi}\lambda^\pm_{w_1}r_1e^{i\theta_1}}
					=\phi_\alpha(\lambda^\pm_{w_1}r_1e^{i\theta_1}).
			\end{eqnarray*}
		\end{proof}
		\begin{corollary}\label{comp coro}
			If the $q$-Berezin range of $C_{\phi_\alpha}$ on $H^2(\mathbb{D})$ is convex, then $\Re\left\{\langle C_{\phi_\alpha}\hat{k}_{w_1},\hat{k}_{w_2}\rangle\right\}\in$ Ber$_q(C_{\phi_\alpha})$ for each $w_1\in \mathbb{D}$, and $w_2\in\mathcal{S}_{w_1}$.
		\end{corollary}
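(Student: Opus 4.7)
The plan is to mimic the argument used for the corresponding corollary after Proposition~\ref{symmetric} (the finite-rank case), namely to combine the symmetry of $\mathrm{Ber}_q(C_{\phi_\alpha})$ about the real axis with the hypothesized convexity. Concretely, I would fix $w_1 \in \mathbb{D}$ and some $w_2 \in \mathcal{S}_{w_1}$, and set $z = \langle C_{\phi_\alpha}\hat{k}_{w_1},\hat{k}_{w_2}\rangle$, which belongs to $\mathrm{Ber}_q(C_{\phi_\alpha})$ by definition.

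Next, I would invoke Proposition~\ref{conjugation}, which has just been proved, to conclude that the conjugate point $\overline{z} = \overline{\langle C_{\phi_\alpha}\hat{k}_{w_1},\hat{k}_{w_2}\rangle}$ equals $\langle C_{\phi_\alpha}\hat{k}_{\overline{w_1}e^{2i\psi}},\hat{k}_{\overline{w_2}e^{2i\psi}}\rangle$, where $\alpha = \rho e^{i\psi}$. In particular, $\overline{z}$ is itself an element of $\mathrm{Ber}_q(C_{\phi_\alpha})$, because the points $\overline{w_1}e^{2i\psi}$ and $\overline{w_2}e^{2i\psi}$ lie in $\mathbb{D}$ and still satisfy the constraint $\langle \hat{k}_{\overline{w_1}e^{2i\psi}},\hat{k}_{\overline{w_2}e^{2i\psi}}\rangle = q$ (this last point follows from the explicit formula for $\langle\hat{k}_{w_1},\hat{k}_{w_2}\rangle$ established in the very first lemma of Section~2, since replacing $w_j$ by $\overline{w_j}e^{2i\psi}$ leaves both $|w_j|$ and $\overline{w_1}w_2$ unchanged).

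Finally, the assumed convexity of $\mathrm{Ber}_q(C_{\phi_\alpha})$ forces the midpoint of $z$ and $\overline{z}$ to lie in the set, and
\[
\tfrac{1}{2}\langle C_{\phi_\alpha}\hat{k}_{w_1},\hat{k}_{w_2}\rangle + \tfrac{1}{2}\overline{\langle C_{\phi_\alpha}\hat{k}_{w_1},\hat{k}_{w_2}\rangle} = \Re\bigl\{\langle C_{\phi_\alpha}\hat{k}_{w_1},\hat{k}_{w_2}\rangle\bigr\},
\]
which gives the desired conclusion. The only step that requires a little care, and which I expect to be the principal obstacle, is verifying that the conjugated pair $(\overline{w_1}e^{2i\psi},\overline{w_2}e^{2i\psi})$ indeed lies in the admissible set for defining $\mathrm{Ber}_q$, so that $\overline{z}$ genuinely belongs to the $q$-Berezin range; everything else then reduces to a one-line convex combination.
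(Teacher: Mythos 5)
Your proposal is correct and follows essentially the same route as the paper: invoke Proposition~\ref{conjugation} to place $\overline{z}$ in $\mathrm{Ber}_q(C_{\phi_\alpha})$ and then take the midpoint of $z$ and $\overline{z}$ using convexity. Your extra check that the conjugated pair $(\overline{w_1}e^{2i\psi},\overline{w_2}e^{2i\psi})$ remains admissible (which works because the constraint forces $\overline{w_1}w_2$ to be real, so it is fixed under the substitution) is a worthwhile refinement that the paper leaves implicit, and your displayed convex combination $\tfrac12 z+\tfrac12\overline{z}$ is in fact the cleaner way to write the identity the paper intends.
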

		\begin{proof}
			Suppose, Ber$_q(C_{\phi_\alpha})$ is convex. Also, from Proposition~\ref{conjugation} since, it is symmetric about the real line, we have,
			\[	\dfrac{1}{2}\langle C_{\phi_\alpha}\hat{k}_{w_1},\hat{k}_{w_2}\rangle+\dfrac{1}{2}\overline{\langle C_{\phi_\alpha}\hat{k}_{\overline{w_1}e^{2i\psi}},\hat{k}_{\overline{w_2}e^{2i\psi}}\rangle}=\Re\left\{\langle C_{\phi_\alpha}\hat{k}_{w_1},\hat{k}_{w_2}\rangle\right\}\in \mathrm{Ber}_q(C_{\phi_\alpha}).\]
		\end{proof}
        Let $w_1\in\mathbb{D}$ and $w_2\in \mathcal{S}_{w_1}$, that is, $\langle\hat{k}_{w_1},\hat{k}_{w_2}\rangle=q$. Then for the automorphism $\phi_\alpha(z)=\dfrac{z-\alpha}{1-\overline{\alpha}z},\ z\in\mathbb{D}$, we have,
        \begin{eqnarray}\label{eq comp}
		\langle C_{\phi_{\alpha}}\hat{k}_{w_1},\hat{k}_{w_2}\rangle &=&\dfrac{q(1-\overline{w_1}w_2)}{1-\overline{w_1}\phi_\alpha(w_2)}\nonumber\\
        &=&\dfrac{q(1-\overline{w_1}w_2)(1-\overline{\alpha}w_2)}{1-\overline{w_1}w_2-\overline{\alpha}w_2+\alpha\overline{w_1}}.
		\end{eqnarray}
        Now, we plot Ber$_q(C_{\phi_\alpha})$ for $\alpha=-\dfrac{1}{2}$ and $q=0.5$, which can be expressed as \[\mathrm{Ber}_q(C_{\phi_\alpha})=\Delta_+\cup \Delta_-\cup \{q\},\] where 
\[\Delta_+=\left\{\dfrac{q(1-\lambda^+_{w_1}|w_1|^2)\left(1-\overline{\alpha}\lambda^+_{w_1}|w_1|e^{i\theta}\right)}{1-\lambda^+_{w_1}|w_1|^2-\overline{\alpha}\lambda^+_{w_1}|w_1|e^{i\theta}+\alpha|w_1|e^{-i\theta}}:0<|w_1|<1,0\leq\theta<2\pi\right\},\] and \[\Delta_-=\left\{\dfrac{q(1-\lambda^-_{w_1}|w_1|^2)\left(1-\overline{\alpha}\lambda^-_{w_1}|w_1|e^{i\theta}\right)}{1-\lambda^-_{w_1}|w_1|^2-\overline{\alpha}\lambda^-_{w_1}|w_1|e^{i\theta}+\alpha|w_1|e^{-i\theta}}:0<|w_1|<1,0\leq\theta<2\pi\right\}.\] 
			\begin{figure}[H]
		\centering
		\includegraphics[width=0.42\columnwidth]{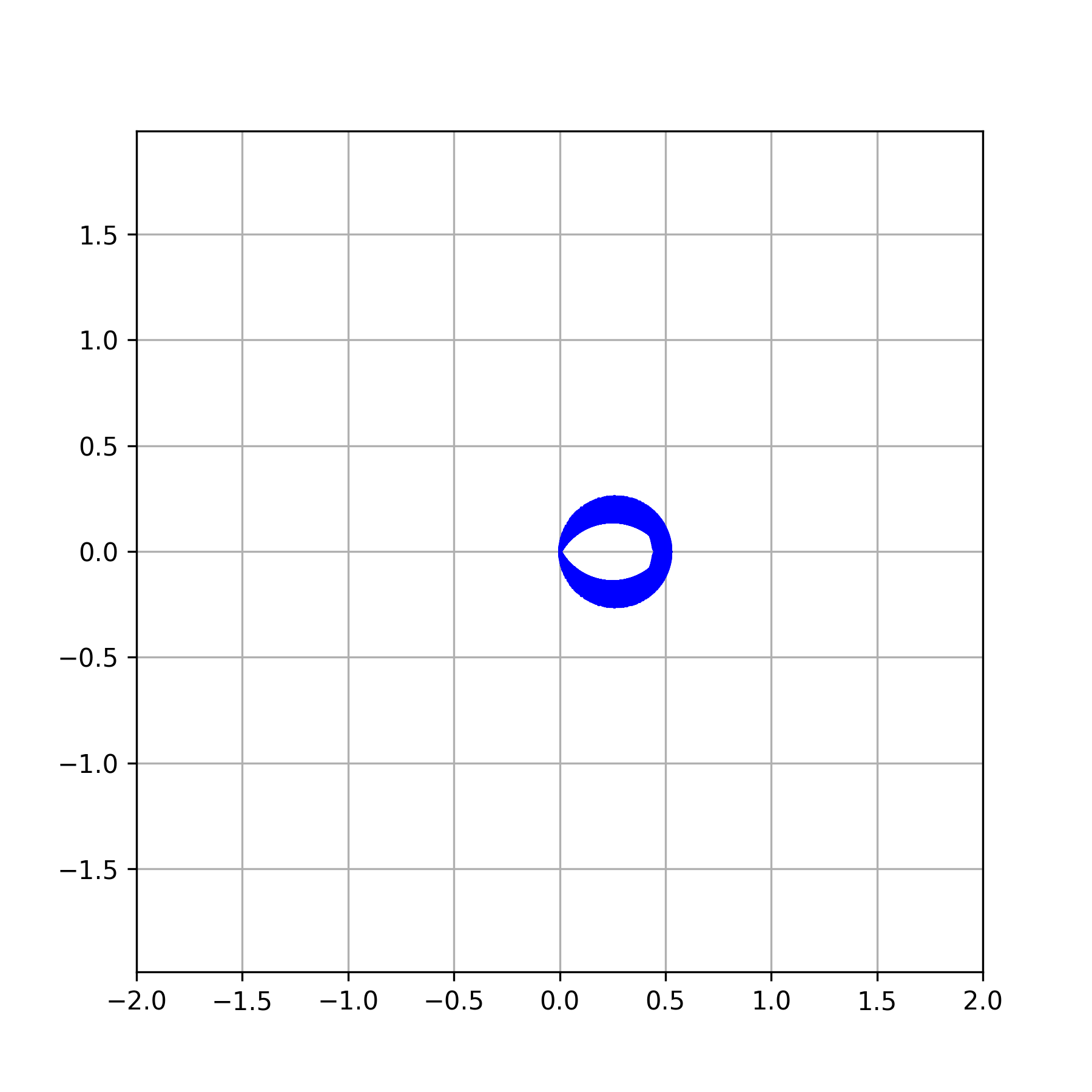}
		\includegraphics[width=0.42\columnwidth]{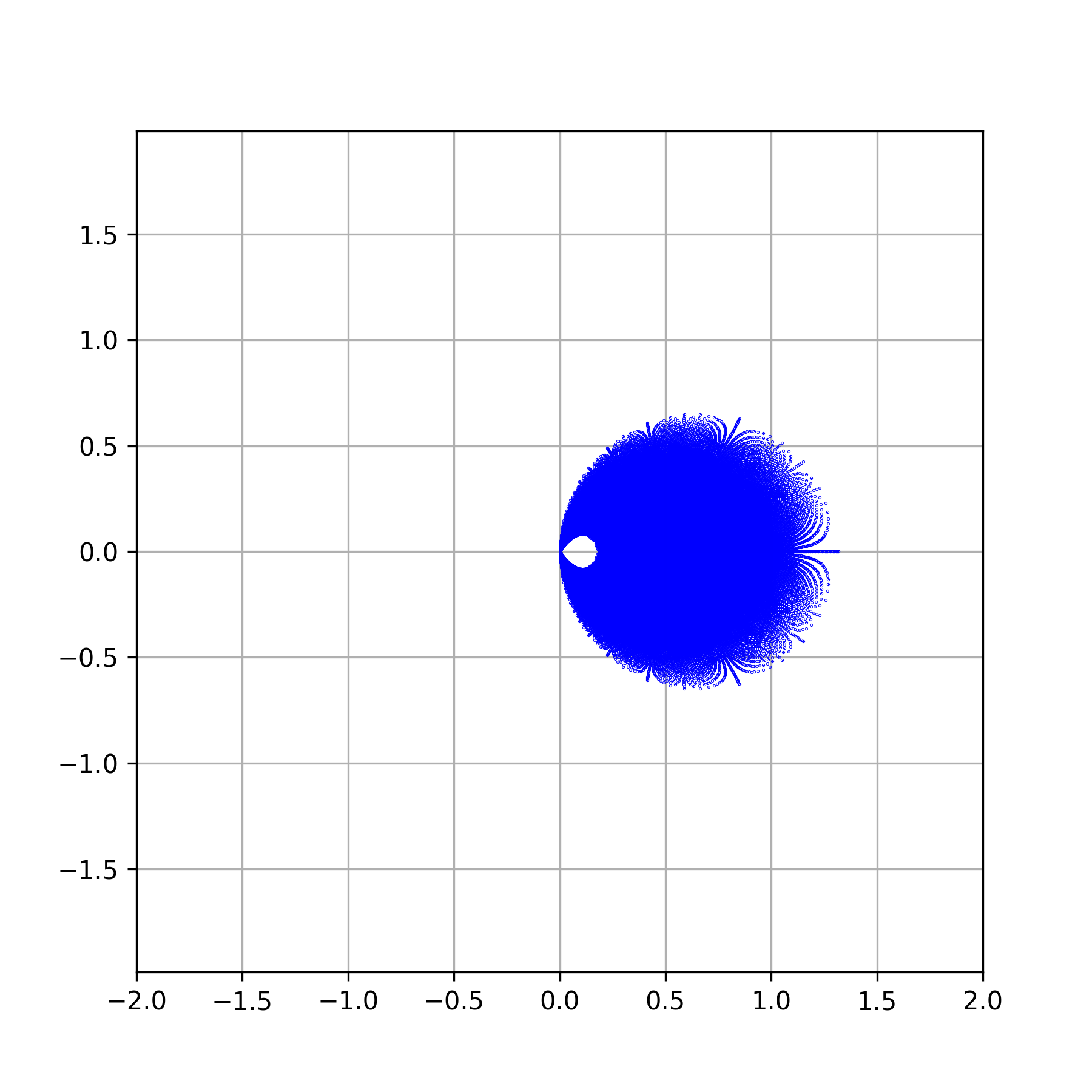}
		\caption{$\Delta_+$ (left) and $\Delta_-$ (right) for $\alpha=-\frac{1}{2}$ and $q=0.5$.}
        \label{FIG:6}
	\end{figure}
    From the above figures it is clear Ber$_q(C_{\phi_\alpha})$ is not convex in general. In the next theorem we will give characterization of $\alpha\in\mathbb{D}$ such that Ber$_q(C_{\phi_\alpha})$ is convex.
\begin{theorem}
	The $q$-Berezin range of $C_{\phi_\alpha}$ on $H^2(\mathbb{D})$ is convex if and only if $\alpha=0$.
	\end{theorem}
	\begin{proof}
			If $\alpha=0$ then from equation~\eqref{eq comp} we get, Ber$_q(C_{\phi_\alpha})=\{q\}$, which is convex.

            Conversely, suppose that Ber$_q(C_{\phi_\alpha})$ is convex. 
            Let $w_1 (\neq0)\in\mathbb{D}$ and $w_2\in\mathcal{S}_{w_1}$, that is, $w_2=\lambda^\pm_{w_1}w_1$. Thus, equation~\eqref{eq comp} reduces to 
            \begin{eqnarray}\label{eq red}
\langle C_{\phi_{\alpha}}\hat{k}_{w_1},\hat{k}_{w_2}\rangle=\dfrac{q(1-\lambda^\pm_{w_1}|w_1|^2)(1-\overline{\alpha}\lambda^\pm_{w_1}w_1)}{1-\lambda^\pm_{w_1}|w_1|^2-\overline{\alpha}\lambda^\pm_{w_1}w_1+\alpha\overline{w_1}}.\end{eqnarray} Now, we break it into real and imaginary parts. Let $\lambda^\pm_{w_1}|w_1|^2=t$, $\overline{\alpha}w_1=z=x+iy$. Hence, $\alpha\overline{w_1}=\overline{z}$ and $\overline\alpha w_2=\lambda^\pm_{w_1}z$. After substituting these into equation~\eqref{eq red} and carrying out straightforward computations, we obtain,
\[\Re\left\{\langle C_{\phi_\alpha}\hat{k}_{w_1},\hat{k}_{w_2}\rangle\right\}=q\dfrac{(1-t)[(1-\lambda^\pm_{w_1}x)(1-t-\lambda^\pm_{w_1}x+x)+\lambda^\pm_{w_1}y^2(\lambda^\pm_{w_1}-1)]}{(1-t-\lambda^\pm_{w_1}x+x)^2+y^2(\lambda^\pm_{w_1}-1)^2}\] and 
\[\Im\left\{\langle C_{\phi_\alpha}\hat{k}_{w_1},\hat{k}_{w_2}\rangle\right\}=q\dfrac{(1-t)y[(1-\lambda^\pm_{w_1}x)(\lambda^\pm_{w_1}-1)-\lambda^\pm_{w_1}(1-t-\lambda^\pm_{w_1}x+x)]}{(1-t-\lambda^\pm_{w_1}x+x)^2+y^2(\lambda^\pm_{w_1}-1)^2}.\] Corollary~\ref{comp coro} gives $\Re\left\{\langle C_{\phi_\alpha}\hat{k}_{w_1},\hat{k}_{w_2}\rangle\right\}\in\mathrm{Ber}_q(C_{\phi_\alpha})$. This implies for each $w_1\in\mathbb{D}\setminus\{0\}$ and $w_2\in\mathcal{S}_{w_1}$ we can get $z_1\in\mathbb{D}\setminus\{0\}$ and $z_2\in\mathcal{S}_{z_1}=\left\{z_2\in\mathbb{D}:\langle\hat{k}_{z_1},\hat{k}_{z_2}\rangle=q\right\}$ such that 
\[\langle C_{\phi_\alpha}\hat{k}_{z_1},\hat{k}_{z_2}\rangle=\Re\left\{\langle C_{\phi_\alpha}\hat{k}_{w_1},\hat{k}_{w_2}\rangle\right\}.\] This follows that \[\Im\left\{\langle C_{\phi_\alpha}\hat{k}_{z_1},\hat{k}_{z_2}\rangle\right\}=q\dfrac{(1-t_1)v[(1-\lambda^\pm_{z_1}u)(\lambda^\pm_{z_1}-1)-\lambda^\pm_{z_1}(1-t_1-\lambda^\pm_{z_1}u+u)]}{(1-t_1-\lambda^\pm_{z_1}u+u)^2+v^2(\lambda^\pm_{z_1}-1)^2}=0.\] Since, $t_1=\lambda^\pm_{z_1}|z_1|^2\in\left[\dfrac{q-1}{q+1},1\right)$, and $(1-\lambda^\pm_{z_1}u)(\lambda^\pm_{z_1}-1)-\lambda^\pm_{z_1}(1-t_1-\lambda^\pm_{z_1}u+u)]=\lambda^\pm_{z_1}t_1-1<0$, where $\overline{\alpha}z_1=u+iv$, then we have $\Im\left\{\langle C_{\phi_\alpha}\hat{k}_{z_1},\hat{k}_{z_2}\rangle\right\}=0$ if and only if $v=\Im\{\overline{\alpha}z_1\}=0$. This tells that $\alpha$ and $z_1$ lie on a line passing through the origin. Hence, $z_1=p\alpha$ for some $p\in\left(-\dfrac{1}{|\alpha|},\dfrac{1}{|\alpha|}\right)$. Now, we have, 
\begin{eqnarray*}
    \langle C_{\phi_\alpha}\hat{k}_{z_1},\hat{k}_{z_2}\rangle&=&\Re\left\{\langle C_{\phi_\alpha}\hat{k}_{p\alpha},\hat{k}_{\lambda^\pm_{p\alpha}p\alpha}\rangle\right\}\\
    &=& q\dfrac{(1-\lambda^\pm_{p\alpha}p^2|\alpha|^2)(1-\lambda^\pm_{p\alpha}p|\alpha|^2)}{1-\lambda^\pm_{p\alpha}p^2|\alpha|^2-\lambda^\pm_{p\alpha}p|\alpha|^2+p|\alpha|^2}.
\end{eqnarray*} Let $k=\sqrt{1-q^2}$. 
Consequently, 
\[\left\{\sqrt{1-k^2}\dfrac{(1-\lambda^+_{p\alpha}p^2|\alpha|^2)(1-\lambda^+_{p\alpha}p|\alpha|^2)}{1-\lambda^+_{p\alpha}p^2|\alpha|^2-\lambda^+_{p\alpha}p|\alpha|^2+p|\alpha|^2}:p\in\left(-\dfrac{1}{|\alpha|},\dfrac{1}{|\alpha|}\right)\right\}=\left(\dfrac{\sqrt{1-k^2}(1-|\alpha|)}{1-|\alpha|k},\dfrac{\sqrt{1-k^2}(1+|\alpha|)}{1+|\alpha|k}\right),\] and 
\[\left\{\sqrt{1-k^2}\dfrac{(1-\lambda^-_{p\alpha}p^2|\alpha|^2)(1-\lambda^-_{p\alpha}p|\alpha|^2)}{1-\lambda^-_{p\alpha}p^2|\alpha|^2-\lambda^-_{p\alpha}p|\alpha|^2+p|\alpha|^2}:p\in\left(-\dfrac{1}{|\alpha|},\dfrac{1}{|\alpha|}\right)\right\}=\left(\dfrac{\sqrt{1-k^2}(1-|\alpha|)}{1+|\alpha|k},\dfrac{\sqrt{1-k^2}(1+|\alpha|)}{1-|\alpha|k}\right).\] Hence, \begin{eqnarray*} 
\langle C_{\phi_\alpha}\hat{k}_{z_1},\hat{k}_{z_2}\rangle&=&\left(\dfrac{\sqrt{1-k^2}(1-|\alpha|)}{1-|\alpha|k},\dfrac{\sqrt{1-k^2}(1+|\alpha|)}{1+|\alpha|k}\right)\bigcup\left(\dfrac{\sqrt{1-k^2}(1-|\alpha|)}{1+|\alpha|k},\dfrac{\sqrt{1-k^2}(1+|\alpha|)}{1-|\alpha|k}\right)\\
&=& \left(
\dfrac{\sqrt{1-k^2}(1-|\alpha|)}{1+|\alpha|k},\dfrac{\sqrt{1-k^2}(1+|\alpha|)}{1-|\alpha|k}
\right).
\end{eqnarray*}
 Putting $w_1=re^{i\theta}$ from equation~\eqref{eq red} one can easily check that 
\[
\lim_{r\to1^-}\langle C_{\phi_{\alpha}}\hat{k}_{w_1},\hat{k}_{w_2}\rangle
=
\begin{cases}
0, & \alpha \neq 0,\\
\sqrt{1-k^2}, & \alpha = 0.
\end{cases}
\] This says that when $\alpha\neq0$ given $\epsilon$ with $0<\epsilon<\dfrac{\sqrt{1-k^2}(1-|\alpha|)}{1+|\alpha|k}$ there exist $w_1\in\mathbb{D}\setminus\{0\}$, $w_2\in\mathcal{S}_{w_1}$ such that $\left|\Re\left\{C_{\phi_{\alpha}}\hat{k}_{w_1},\hat{k}_{w_2}\rangle\right\}\right|<\epsilon$. Also, if $\langle C_{\phi_\alpha}\hat{k}_{z_1},\hat{k}_{z_2}\rangle=\Re\left\{\langle C_{\phi_\alpha}\hat{k}_{w_1},\hat{k}_{w_2}\rangle\right\}$, this is a contradiction as, $\langle C_{\phi_\alpha}\hat{k}_{z_1},\hat{k}_{z_2}\rangle\in\left(
\dfrac{\sqrt{1-k^2}(1-|\alpha|)}{1+|\alpha|k},\dfrac{\sqrt{1-k^2}(1+|\alpha|)}{1-|\alpha|k}
\right)$. Hence, Ber$_q(C_{\phi_\alpha})$ cannot be convex unless $\alpha=0$.  
	\end{proof}
\begin{remark}
    In particular, if $q=1$ then from the above theorem we have Ber$(C_{\phi_\alpha})$ is convex if and only if $\alpha=0$ \cite[Theorem~4.5]{cowen2022convexity}.
\end{remark}		
\section{Conclusion}\label{sec4}
This paper investigates the $q$-Berezin range of several classes of bounded linear operators on the Hardy space $H^2(\mathbb{D})$ for $0<q\leq1$, and establishes results concerning their geometric structure and convexity properties. These results deepen the theoretical understanding of the $q$-Berezin framework, reveal new aspects of the operator-theoretic geometry under kernel constraints. Future work may include establishing these results for other reproducing kernel Hilbert spaces, exploring finer geometric properties, and studying the $q$-Berezin number. 








\section*{Declarations}

\begin{itemize}
	\item Availability of data and materials: Not applicable.
	\item Competing interests: The authors declare that they have no competing interests.
	\item Funding: Not applicable.
	\item Authors' contributions: The authors contribute equally to this work.
	\end{itemize}

\end{document}